\documentclass[11pt,a4paper]{amsart}
\usepackage{amsfonts,amsmath,amssymb,amsthm}
\usepackage{times}
\usepackage{bbm}
\usepackage{color}

\numberwithin{equation}{section}

\renewcommand{\epsilon}{\varepsilon}

\DeclareSymbolFont{SY}{U}{psy}{m}{n}
\DeclareMathSymbol{\emptyset}{\mathord}{SY}{'306}

\renewcommand{\div}{\mathrm{div}\,}

\DeclareMathOperator*{\esssup}{ess\,sup}

\DeclareMathSymbol{\newtimes}{\mathbin}{SY}{'264}

\newcommand{\R}{\mathbb{R}}

\newcommand{\uu}{\mathbf{u}}
\newcommand{\dd}{\mathbf{d}}
\newcommand{\ww}{\mathbf{w}}
\newcommand{\vv}{\mathbf{v}}
\newcommand{\nn}{\mathbf{n}}

{\bf}{\it}
{\bf}{\it}
{\bf}{\it}
{\bf}{\it}

{\bf}{\it}

{\bf}{\it}

\newtheorem{theorem}{Theorem}[section]{\bf}{\it}
{\bf}{\it}
\newtheorem{corollary}[theorem]{Corollary}{\bf}{\it}
{\it}{\rm}
\newtheorem{lemma}[theorem]{Lemma}{\bf}{\it}
{\it}{\rm}
\newtheorem{definition}[theorem]{Definition}{\bf}{\it}
{\bf}{\it}
{\bf}{\it}
{\bf}{\it}
{\bf}{\it}

\title[compactness of artificial compressibility approximations]{On the compactness of artificial compressibility approximations of weak solutions for fluid problems in deforming domains}

\author[S. Schmitz]{Stephan Schmitz}
\address{S.~ Schmitz, RPTU University Kaiserslautern-Landau, Department of Natural and Environmental Sciences, Institute of Mathematics,\newline Fortstrasse 7, D-76829 Landau, Germany.}
\email{stephan.schmitz@rptu.de}

\author[A. Hundertmark]{Anna Hundertmark}
\address{A.~ Hundertmark, RPTU University Kaiserslautern-Landau, Department of Natural and Environmental Sciences, Institute of  Mathematics,\newline Fortstrasse 7, D-76829 Landau, Germany.}
\email{a.hundertmark@rptu.de}

\subjclass[2010]{35D30, 35Q30, 74F10, 76D05, 76D03}

\keywords{Navier-Stokes-Equations, artificial compressibility approximation, deforming domains, weak solutions, compactness}

\copyrightinfo{2026}{A.~Hundertmark, 
S.~Schmitz}

\begin{document}
\begin{abstract}
 In this contribution, a fluid flow problem on a general deforming domain for a Newtonian fluid  in two and three space dimensions with artificial compressibility approximation is studied. We prove an estimate on the integral equicontinuity in time of the weak solutions under suitable domain regularity assumptions,  which is independent on the compressibility parameter and serves as an alternative compactness argument for the  convergence of weak solution sequences for vanishing compressibility.

 The corresponding estimate is obtained by remapping the problem onto a fixed  reference domain and using appropriate divergence-preserving testfunctions
involving the difference of two solutions at different points in time, thus defined with respect to different domains/coordinates. 

\end{abstract}

\maketitle

\section{Introduction and Main Result}\label{sec:intro}
 In recent years, much attention has come to Fluid-Solid Interaction (FSI) Problems in various scenarios  and applications, 
 from existence, uniqueness  and continuous dependence of (weak) solutions on data to their numerical  treatment, 
 see e.g. \cite{Canic}, \cite{Eberlein},   \cite{Grandmont}, \cite{Filo1}, \cite{anna1},  \cite{Padula},   \cite{Ruzicka} and the collections \cite{Galdi08}, \cite{Galdi14}  to name just a few publications in this area. 

In many cases, the  
 (weak) solvability of the FSI-problem is achieved by 
approximating the problem by a better treatable approximation, 
and verifying, that the corresponding weak  solutions indeed converge 
to solutions of the original problem.   Such approximating sequences can for instance be generated via parameters in the 
coupling fluid-structure conditions (see the authors previous work  \cite{Filo1}, \cite{anna2}), or parameters in the displacement equation (see e.g. \cite{Grandmont}, \cite{Grandmont08}, \cite{Padula}), a time discretization (see e.g. \cite{Canic}, \cite{anna2}), or via regularization operators and smoothing  or additional viscosity in the deformation equation  (see e.g. \cite{Eberlein}, \cite{Grandmont}, \cite{Ruzicka}). 

In this contribution,  we consider  artificial compressibility approximations for a Newtonian fluid-flow  (Navier-Stokes) problem  in time-dependent domains 
in space dimension two and three. The treatment of  non-linear convective terms in the momentum equation requires the strong convergence of the weak solution sequence in appropriate spaces. We present a detailed argumentation for strong convergence  of a sequence of such weak solution approximations in the appropriate $L^p$-space. The presented result can be applied to fluid-structure interaction problems, provided the necessary regularity of the domain deformation mapping is satisfied. 
  The crucial estimate  implying the compactness of the solution sequence is based  on the construction of appropriate  test functions, composed of a time-shift of the weak solution defined on the time-dependent, deforming domain. 
 A similar technique utilizing testfunctions that are additionally  solenoidal  w.r.t. different coordinates simultaneously has been used  
in the author's previous work \cite{anna2} on a 2D fluid-structure interaction problem.  In that result, in particular the continuous dependence  of the weak solution w.r.t.  the given domain deformation data is proved  finally resulting in the contractivity  of the fixpoint mapping  and the convergence of the corresponding geometry-iterative method.

  In what follows, we present and generalize this  technique for the proof  of compactness of artificial compressibility solution sequences 
  from a rectangular two dimensional reference domain \(D=(0,L)\times (0,1)\)  with a specific deformation function $\dd(y,t )=(0,h(y_1,t))^T$ describing the  vertical deformation of the upper part of the boundary 
  considered in  \cite{anna2},  to a general deforming domain \(\Omega_0\) in dimension two and three. We apply this technique to obtain an  \(L^2\)-bound of the time shift of the solution, 
 which we call the equicontinuity in time,  to finally obtain  the strong convergence of the series of approximations  in appropriate spaces and show the convergence of the compressible weak solution sequence to a divergence-free limit that solves the original weak problem.

 In this work the existence of an  artificially compressible weak solution \((\vv_\varepsilon,p_\varepsilon)\) and  the motion of the solid by its deformation \(\dd\) is assumed to be given.

Main result of this work is the following: 
\begin{theorem}\label{thm:Main1}
 Let \(\vv_\varepsilon \in  L^2
(0,T; H^1(\Omega_t))\cap L^\infty(0,T; L^2(\Omega_t))\) be a weak solution to the Navier-Stokes-Equations in the sense of Definition \ref{def:weak} in a time dependent domain \(\Omega_t\subset \mathbb{R}^n\) for \(n=2\) or \(n=3\),
\begin{equation*}\partial_t \vv_\varepsilon +(\vv_\varepsilon\cdot \nabla )\vv_\varepsilon-2\frac{\mu}{\rho}\div[e(\vv_\varepsilon)]+ \frac{1}{\rho}\nabla p_\varepsilon+\frac{1}{2} \vv_\varepsilon \div \vv_\varepsilon=\vec 0\text{ in  }\Omega_t\end{equation*} with additional artificial compressibility   term \(\varepsilon/\rho \Delta p_\varepsilon\) in the continuity
equation 
\begin{equation*}-\frac{\varepsilon}{\rho}\Delta p_\varepsilon +\div \vv_\varepsilon=0\text{ in }\Omega_t,\end{equation*} where 
\(\Omega_t=A_t(\Omega_0)\) with \(\Omega_0\) the reference domain at \(t=0\), \(A_t\colon \overline{\Omega_0}\to \overline{\Omega_t}\) the 
mapping between \(\Omega_0\) and \(\Omega_t\), \(e(\vv_\varepsilon)=\frac1{2}(\nabla \vv_\varepsilon +(\nabla \vv_\varepsilon)^T)\) is the symmetric deformation gradient.  The boundary  and coupling conditions for the velocity are valid as presented in Section \ref{initcons}. 
For the  artificial compressibility equation, the Neumann boundary conditions 
\(\nabla p_\varepsilon\cdot \vec{\nn}=0\text{ on }\partial \Omega_t\) with \(\vec{\nn}\) the outer unit normal at the boundary and the pressure normalization \(\int_{\Omega_t}p_\varepsilon dx=0\)  
 are applied.\\
In addition, in the case of \(n=3\), let \(\vv_\varepsilon \in L^4(0,T; L^4(\Omega_t))\) be uniformly bounded. 

\medskip

 Then the transformation \(\uu_\varepsilon\) of  \(\vv_\varepsilon\) to the fixed reference domain \(\Omega_0\), \(\uu_\varepsilon(y,t)=\vv_\varepsilon(A_t(y),t)=\vv_\varepsilon(x,t)\),  satisfies the following equicontinuity estimate uniformly in $\varepsilon$: 
\begin{equation}\label{eq:result}\int_0^{T-\tau}\int_{\Omega_0}\big|\sqrt{J}\uu_\varepsilon(y,s+\tau)-\sqrt{J}\uu_\varepsilon(y,s)\big|^2 dyds\leq C\tau,\end{equation} where \(J=\mathrm{det}(\nabla_yA_t)\) is the  Jacobian of the variable transformation between $\Omega_t$ and the reference domain $\Omega_0$.  
\end{theorem}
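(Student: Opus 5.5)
We follow the classical Lions–Temam strategy for time-shift estimates, carried out on the fixed reference domain \(\Omega_0\). Write \(w(y,s)=\sqrt{J}\uu_\varepsilon(y,s+\tau)-\sqrt{J}\uu_\varepsilon(y,s)\). The quantity to be bounded is
\[
\int_0^{T-\tau}\int_{\Omega_0}|w|^2\,dy\,ds=\int_0^{T-\tau}\int_{\Omega_0}\Big(\int_s^{s+\tau}\partial_\sigma(\sqrt{J}\uu_\varepsilon)(y,\sigma)\,d\sigma\Big)\cdot w(y,s)\,dy\,ds .
\]
The weighting by \(\sqrt{J}\) makes the \(L^2(\Omega_0)\) pairing of \(\sqrt J\uu\) with itself equal to the \(L^2(\Omega_t)\) norm of \(\vv\). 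So the inner integral can be rewritten through the weak formulation of Definition \ref{def:weak}, transformed to \(\Omega_0\). There the time derivative appears as \(\frac{d}{dt}\int_{\Omega_0}J\,\uu\cdot\boldsymbol\varphi\,dy\) plus lower-order terms coming from \(\partial_tJ\) and from the domain velocity \(\partial_tA_t\circ A_t^{-1}\) (ALE convective correction).

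Step 1 is to construct the test function. For fixed \(s\) and \(\sigma\in(s,s+\tau)\), we want to insert a test function \(\boldsymbol\varphi(\cdot,\sigma)\) on \(\Omega_\sigma\) whose pullback is \(\boldsymbol\psi = \frac{1}{\sqrt{J(\sigma)}}w(\cdot,s)\) (up to the factor needed to reproduce \(w\) after multiplication by \(J\)). This is where difficulties arise. The weak formulation presumably requires test functions satisfying the kinematic/boundary coupling of Section \ref{initcons}. In the compressible setting, divergence-free test functions would leave the pressure term \(\int p_\varepsilon \div\boldsymbol\varphi\) uncontrolled uniformly in \(\varepsilon\). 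Therefore:
\begin{itemize}
\item we use the continuity equation in weak form, \(\varepsilon\int\nabla p_\varepsilon\cdot\nabla q=-\int q\div\vv_\varepsilon\);
\item we use the energy bound \(\sqrt{\varepsilon}\|\nabla p_\varepsilon\|_{L^2L^2}\le C\), which follows from the energy inequality with the \(\frac12\vv\div\vv\) term cancelling the convective contribution;
\item the pressure term \(\int p\,\div\boldsymbol\varphi\) involves \(\div\) of a function composed from \(\uu(\cdot,s)\) and \(\uu(\cdot,s+\tau)\), taken in coordinates of \(\Omega_\sigma\).
\end{itemize}
Following the idea of \cite{anna2}, we modify \(\boldsymbol\psi\) by a Piola-type transformation \(\frac{1}{J}\nabla A\,\cdot\) instead of plain composition. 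The Piola transform maps divergence to \(J^{-1}\) times divergence, so \(\div_x\boldsymbol\varphi(\sigma)\) is expressed through \(\div\vv_\varepsilon\) at times \(s\) and \(s+\tau\), namely \(\varepsilon\Delta p_\varepsilon\) there. This yields pressure terms of the form \(\varepsilon\int\nabla p_\varepsilon(\sigma)\cdot\nabla p_\varepsilon(s)\), bounded by Cauchy–Schwarz and the \(\sqrt\varepsilon\) bound. The mismatch between the Piola version and the plain \(\sqrt J\)-weighted version of \(w\) is controlled by \(\|A_\sigma-A_s\|_{W^{1,\infty}}\le C|\sigma-s|\le C\tau\), from the assumed regularity of the deformation. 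I expect this step to be the main obstacle: making the test function simultaneously admissible (boundary/coupling conditions on \(\Omega_\sigma\)) and divergence-compatible, with all commutator terms \(O(\tau)\) in \(L^1\) in time.

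Step 2 estimates each remaining term after integrating over \(\sigma\in(s,s+\tau)\) and \(s\in(0,T-\tau)\), and exchanging the order of integration so that every term is bounded by \(\tau\) times a product of norms. The viscous term is bounded by \(\|\nabla\vv(\sigma)\|_{L^2}(\|\nabla\vv(s)\|_{L^2}+\|\nabla\vv(s+\tau)\|_{L^2})\), integrable by \(L^2H^1\). The convective term and the \(\frac12\vv\div\vv\) term are handled as follows:
\begin{itemize}
\item for \(n=2\), by Ladyzhenskaya's inequality \(\|v\|_{L^4}^2\le C\|v\|_{L^2}\|v\|_{H^1}\) together with \(L^\infty L^2\cap L^2H^1\);
\item for \(n=3\), by the assumed uniform \(L^4L^4\) bound, giving \(\int\|\vv\|_{L^4}^2\|\nabla\boldsymbol\varphi\|_{L^2}\le C\).
\end{itemize}
The ALE terms with \(\partial_tA\) and \(\partial_tJ\) are bounded by the \(L^\infty L^2\) norm. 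In every case Fubini gives \(\int_0^{T-\tau}\int_s^{s+\tau}f(\sigma)g(s)\,d\sigma\,ds\le\tau\|f\|_{L^2}\|g\|_{L^2}\) or the analogous \(L^1\times L^\infty\) version. Summing the terms yields \eqref{eq:result} with \(C\) independent of \(\varepsilon\), since only energy-type bounds are used.
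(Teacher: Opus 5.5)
Your plan is essentially the paper's proof. The Piola transform $J^{-1}\mathbb{J}(t)=\mathbb{R}^{-1}(t)$ applied to the fixed field $(\mathbb{R}\uu)(s+\tau)-(\mathbb{R}\uu)(s)$ is exactly the test function \eqref{TestF}. The $O(\tau)$ mismatch between the endpoint terms and $|\sqrt{J}\uu(s+\tau)-\sqrt{J}\uu(s)|^2$ is the paper's term (I), and your Fubini bound is the Steklov property \eqref{stek1}.

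The one genuine difference is the pressure. You take $\Phi=0$ and use that $\div(\mathbb{R}(t)\psi(t;s))=\div(\mathbb{R}\uu)(s+\tau)-\div(\mathbb{R}\uu)(s)$ does not depend on $t$. You then insert the continuity equation at the fixed times $\theta\in\{s,s+\tau\}$, tested with $q(t)$. That equation holds for a.e.\ $\theta$, which you get by testing \eqref{weakall} with $\psi=0$, $\Phi=\chi(t)\phi(y)$. This produces $\frac{\varepsilon}{\rho^2}\int_{\Omega_0}(J^{-1}\mathbb{R}\mathbb{R}^T)(\theta)\nabla q(\theta)\cdot\nabla q(t)\,dy$, which is bounded by $C\tau\|\sqrt{\varepsilon}q\|^2_{L^2(0,T;H^1(\Omega_0))}$. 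The paper instead tests with $\Phi=q(s)-q(s+\tau)$ and rewrites the two divergence terms as the exact derivative $-\partial_s(Q\,\div U)$. It then has to handle the endpoint terms (III), (IV), via a second test with $\Phi=Q(0)$, plus the term (IX). Your route is valid and more direct.

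A few remarks on the rest:
\begin{itemize}
\item The admissibility obstacle you anticipate does not arise. Definition \ref{def:weak} only requires $\psi|_{\Gamma_{\mathrm{wall}}}=0$, which holds automatically since $\uu=0$ there.
\item The price is that the boundary integrals with $\sigma_s$, with $p_{\mathrm{in/out}}$, and $-\frac12\int_{\Gamma_{\mathrm{in/out}}}(\uu\cdot\psi)\mathbb{R}\uu\cdot\vec{\nn}\,dS$ must be added to your Step 2, which omits them. The linear ones are routine trace estimates. The paper cancels the nonlinear one against $\hat b$ by the integration by parts \eqref{bterm}, leaving only a $\Gamma_{\mathrm{FSI}}^0$-integral with the given $\ww$.
\item The ALE convection term $(\nabla_y\uu)^T\mathbb{R}\ww\cdot\psi$ needs the $L^2(0,T;H^1(\Omega_0))$ bound, not only the $L^\infty(0,T;L^2(\Omega_0))$ bound.
\end{itemize}
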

 The main ingredient in the proof of Theorem \ref{thm:Main1} is the construction of divergence preserving testfunctions at times \(s\) and 
\(s+\tau\) using the Piola transformation, compare the application of a similar technique in the works \cite{Padula}, \cite{anna2}. 
The equicontinuity estimate \eqref{eq:result} can be used to control the limiting process \(\varepsilon \to 0\) in the artificial compressibility approximation, and is presented in Section \ref{limit}. Shortly formulated, we get the following result: 

\begin{corollary}\label{thm:Main2}

Under the assumptions of Theorem \ref{thm:Main1},  the weak solution  sequence \(\vv_\varepsilon\) converges to the a limit \(\vv_0\), which is an  almost everywhere   divergence-free  weak solution to the incompressible Navier-Stokes-Equations
\begin{equation*}\partial_t \vv_0 +(\vv_0\cdot \nabla )\vv_0-2\frac{\mu}{\rho}\div[e(\vv_0)]+\nabla \frac{p_0}{\rho}=\vec 0,\quad \div( \vv_0)=0.\end{equation*} 
\end{corollary}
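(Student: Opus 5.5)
The plan is a compactness argument. We first extract limits using the $\varepsilon$-uniform bounds, then use the equicontinuity estimate \eqref{eq:result} to upgrade weak convergence to strong convergence. We work throughout on the reference domain $\Omega_0$ with $\uu_\varepsilon$ and the Jacobian $J$.

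\textbf{Step 1: uniform bounds and weak limits.} The energy inequality for the artificially compressible weak solution (Definition \ref{def:weak}) gives several bounds uniform in $\varepsilon$:
\[
\vv_\varepsilon \in L^\infty(0,T;L^2(\Omega_t))\cap L^2(0,T;H^1(\Omega_t)), \qquad \sqrt{\varepsilon}\,\nabla p_\varepsilon \in L^2(0,T;L^2(\Omega_t)).
\]
For $n=3$ we additionally have the assumed uniform $L^4$ bound. Transported by $A_t$, which we assume bi-Lipschitz uniformly in $t$ with $J$ bounded above and below, these give uniform bounds for $\sqrt{J}\uu_\varepsilon$ in $L^2(0,T;H^1(\Omega_0))$. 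We then extract a subsequence with
\[
\uu_\varepsilon\rightharpoonup \uu_0 \ \text{weakly in } L^2(0,T;H^1(\Omega_0)) \ \text{and weakly-* in } L^\infty(0,T;L^2(\Omega_0)).
\]

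\textbf{Step 2: strong convergence.} Theorem \ref{thm:Main1} controls time translations. The $L^2H^1$ bound controls space translations, and $H^1(\Omega_0)$ embeds compactly into $L^2(\Omega_0)$. Combining these through the Kolmogorov--Riesz--Fréchet (Simon) criterion, we obtain strong convergence $\sqrt{J}\uu_\varepsilon\to\sqrt{J}\uu_0$ in $L^2(0,T;L^2(\Omega_0))$. Since $J$ is bounded away from zero and smooth, the same holds for $\uu_\varepsilon$.

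Interpolating with the $L^2H^1$ bound then gives strong convergence in $L^p(0,T;L^q)$:
\begin{itemize}
\item for $n=2$, in $L^4(L^4)$ by Ladyzhenskaya's inequality;
\item for $n=3$, in $L^{p}(L^{p})$ for every $p<4$, using the uniform $L^4$ bound.
\end{itemize}
This suffices to pass to the limit in the convective term $(\vv_\varepsilon\cdot\nabla)\vv_\varepsilon\cdot\varphi$ and in the correction term $\tfrac12\vv_\varepsilon\div\vv_\varepsilon\cdot\varphi$. Each is a product of a strongly convergent factor and a weakly convergent gradient factor, integrated against a smooth test function.

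\textbf{Step 3: divergence-free limit and the limit equation.} We test the continuity equation with $q\in L^2(0,T;H^1)$ to get
\[
\int\div\vv_\varepsilon\, q = -\varepsilon\int\nabla p_\varepsilon\cdot\nabla q = O(\sqrt{\varepsilon}).
\]
Hence $\div\vv_0=0$ in the distributional sense, and since $\vv_0\in L^2H^1$, also almost everywhere.

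For the momentum equation we use test functions that are divergence-free on $\Omega_t$, built via the Piola transform as in Theorem \ref{thm:Main1}. With these the pressure term drops out, and the time-derivative term is handled by integration by parts in time using the transformed formulation. The viscous term passes to the limit by weak convergence, and the nonlinear terms by Step 2. The $\tfrac12\vv\div\vv$ term tends to zero because $\div\vv_0=0$. The pressure $p_0$ is then recovered by de Rham's theorem.

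\textbf{Main obstacle.} I expect the hardest step to be handling the time-dependent domain in the limit passage: the test functions, the time derivative and the convergence must all be formulated consistently on $\Omega_0$ through $A_t$ and $J$. In particular, the Simon-type argument must be applied to $\sqrt{J}\uu_\varepsilon$ rather than $\uu_\varepsilon$. Verifying that the $t$-dependence of $J$ and $\nabla A_t$ does not destroy the translation estimates needs the regularity of $A_t$ in time (e.g.\ $\partial_tA_t$ bounded).
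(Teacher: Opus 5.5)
Your argument follows the paper's proof in Section \ref{limit} essentially step for step. The steps are: the uniform energy bounds; the time-translate estimate \eqref{eq:result} turned into strong convergence of $\uu_\varepsilon$; the $O(\sqrt{\varepsilon})$ bound on $\int\div(\mathbb{R}\uu_\varepsilon)\Phi$ giving a solenoidal limit; and the passage to the limit in \eqref{weakall} with test functions satisfying $\div(\mathbb{R}\psi)=0$. In that last step the convective and $\frac12\div(\mathbb{R}\uu_\varepsilon)\uu_\varepsilon$ terms are handled as strong times weak. The only real difference is the compactness tool. You feed \eqref{eq:result} into Simon's criterion, using the $L^2(0,T;H^1(\Omega_0))$ bound and the compact embedding $H^1(\Omega_0)\hookrightarrow L^2(\Omega_0)$, which gives strong convergence of $\sqrt{J}\uu_\varepsilon$ in $L^2(0,T;L^2(\Omega_0))$ directly. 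The paper instead invokes Alt--Luckhaus \cite[Lemma 1.9]{luckhaus} to get $L^1$ convergence and then interpolates against the uniform $L^4$ bound. Both rest on the same estimate, and yours lands immediately in the space actually needed.

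Three points need correcting.

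\begin{itemize}
\item For $n=2$, Ladyzhenskaya's inequality only gives a uniform bound in $L^4(0,T;L^4(\Omega_0))$. You have $\|\uu_\varepsilon-\uu_0\|_{L^4}^4\le C\|\uu_\varepsilon-\uu_0\|_{L^2}^2\|\uu_\varepsilon-\uu_0\|_{H^1}^2$, and the last factor is merely bounded in $L^1(0,T)$, so it may concentrate in time. Interpolation therefore yields strong convergence only for exponents $p<4$, exactly as in Lemma \ref{weakstrong}. This is harmless, since $L^2$ strong convergence suffices for the limit passage.
\item You never treat the nonlinear boundary term $\int_{\Gamma_{\mathrm{in/out}}}(\uu_\varepsilon\cdot\psi)\mathbb{R}\uu_\varepsilon\cdot\nn\,dS$ in \eqref{weakall}, and interior strong convergence does not cover it. It follows from the multiplicative trace inequality $\|w\|_{L^2(\partial\Omega_0)}\le C\|w\|_{L^2(\Omega_0)}^{1/2}\|w\|_{H^1(\Omega_0)}^{1/2}$ (cf. \eqref{n=3,deltaOmega2}, which holds for $n=2$ as well) applied to $w=\uu_\varepsilon-\uu_0$. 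Combined with the $L^2H^1$ bound, this gives strong convergence of the traces in $L^2(0,T;L^2(\partial\Omega_0))$. The linear boundary conditions on $\Gamma_{\mathrm{wall}}$, $\Gamma_{\mathrm{FSI}}^0$ and $\Gamma_{\mathrm{in/out}}$ pass to the limit by weak continuity of the trace.
\item Your closing appeal to de Rham's theorem to recover $p_0$ on the moving domain is asserted without argument. It also goes beyond what the paper proves, which is the pressure-free formulation \eqref{weakall=0}.
\end{itemize}
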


 The proof of Corollary \ref{thm:Main2} uses that 
by \eqref{eq:result} a subsequence of \(\uu_\varepsilon\) converges strongly  in \(L^p(0,T;L^p( \Omega_0)), 1\leq p<4\),  to the weak limit  \(\uu_0\in L^2(0,T;H^1(\Omega_0))\), which is divergence-free almost everywhere in $\Omega_0$. This argumentation is based on the compactness result by Alt-Luckhaus \cite[Lemma 1.9]{luckhaus}, and is presented in Section \ref{limit}. Similar argumentation for compactness  based on Simon's Lemma \cite{Simon} including time differences was also used in \cite{Grandmont08}, where the approximated sequences of  velocities come from an additional viscous term in the transversal displacement equation. The equicontinuity estimate is however obtained in a different way considering  an extension box domain to the deformed domain  without divergence preserving Piola transformation. 

For the limiting process $\varepsilon \to 0$  in Corollary \ref{thm:Main2}, the strong convergence of \(\uu_\varepsilon \to \uu_0\)  in appropriate spaces is necessary to eliminate the  artificial compressibility in the nonlinear Navier-Stokes problem in deforming domains. An equicontinuity estimate similar to   \eqref{eq:result} was at first presented by Alt-Luckhaus as a sufficient condition for compactness. Our result \eqref{eq:result} presents an alternative compactness argument for solution sequences on deforming domains. 

The classical Aubin compactness argument instead requires uniform a-priori estimates for \(\partial_t\uu_\varepsilon\) that are not {directly} available  in this setting. The compactness criterion by Lions requires the control of fractional derivatives via the Fourier transformation, which also  seems  not  to be straightforward for moving or deformable domains.
 For a discussion on  those compactness arguments, see \cite{Simon}, \cite[Chapter III]{Temam}.  We  also refer to  \cite{Muha} for a variant of the Aubins-Lions-Simon lemma on moving/deforming domains based on Simon's lemma \cite{Simon} with an Arbitrary Lagrangian-Eulerian (ALE)-type time-derivative and uniform integral equicontinuity.

The paper is organized as follows: In Section \ref{Formulation}, the precise formulation of the weak problem is stated. A preliminary investigation of the individual terms of the weak formulation  is given in Section \ref{sec:embedd}. In Section \ref{sec:first a-proiri}, the a-priori estimates for the weak solution  confirm the choice of functional spaces  for weak solutions. Section \ref{sec:second} is devoted to the proof of Theorem \ref{thm:Main1} by  testing the weak formulation with a special set of testfunctions. Based on Theorem \ref{thm:Main1}, the limiting process in  Theorem \ref{thm:Main2} is analyzed in Section \ref{limit} and our results are concluded in Section \ref{sec:conclusion}. Some technical issues, particularly on integral transformations, are for convenience addressed in the Appendix.

\section{Formulation of the problem}\label{Formulation}

\subsection{The domain}

We consider a  bounded deforming domain \(\Omega_t \subset \mathbb{R}^n, n=2, 3\),    that may be subject to in-/outflow of a fluid at an open (pervious) rigid boundary part \(\Gamma_{\mathrm{in}/\mathrm{out}}:=\Gamma_{\mathrm{in}}\cup\Gamma_{\mathrm{out}}\), a rigid non-pervious  non-slip boundary part \(\Gamma_{\mathrm{wall}}\) with positive surface measure  and a boundary part of fluid-solid interface \(\Gamma_{\mathrm{FSI}}\),  also with positive surface measure, attached to a deformable solid, that is \(\partial \Omega_t=\Gamma_{\mathrm{in}/\mathrm{out}}\cup \Gamma_{\mathrm{wall}}\cup \Gamma_{\mathrm{FSI}}\). The FSI-boundary part is subject to the deformation of the attached solid. 

To describe the fluid domain deformation, we consider the domain \(\Omega_t=A_t(\Omega_0)\) as the deformation of a fixed bounded Lipschitz reference domain \(\Omega_0=\Omega_t|_{t=0}\) by an ALE-map\\ \(A_t\colon \overline{\Omega_0}\to \overline{\Omega_t}\), which is assumed in this work to be given. The analysis presented is related to the so-called one-way coupling for fluid-solid-interaction problems and we are not treating the fully coupled FSI-problem. 
We distinguish the coordinates \(x=x(t)\) in the deforming domain  \(\Omega_t\) from the coordinates \(y\) in the fixed reference domain \(\Omega_0\) and use the relation \[x(t)=A_t(y)=y+\dd(y,t),\] where \(\dd\) is the given  additive deformation of the reference domain \(\Omega_0\) with \(\dd|_{\Gamma_{\mathrm{in}/\mathrm{out}}\cup \Gamma_{\mathrm{wall}}}=\vec{0}\). In particular, the rigid  \(\Gamma_{\mathrm{in}/\mathrm{out}}\) and \(\Gamma_{\mathrm{wall}}\) are invariant under \(A_t\) in the sense of the ALE-methodology. We suppose that  \(\dd=\dd(y,t)\)  with \begin{equation}\label{ass:d} \dd\in C^2(0,T;C^2(\overline{\Omega_0})) \end{equation} for some fixed time \(T>0\). In terms of Sobolev spaces, this smoothness can be granted by assuming \(\dd\in H^3(0,T;W^{3,n+1}(\Omega_0))\) (see \cite[Theorem 4.12]{Adams}). 

In addition, we assume that 
\begin{equation}\label{ass:globdiff}A_t\colon \overline{\Omega_0} \to \overline{\Omega_t} \text{ is a global \(C^2\)-diffeomorphism}\end{equation} 
and make the reasonable assumption (see Lemma \ref{invertible} below) that 
\begin{equation}\label{boundd2}|\partial_{y_i}\dd^j(y,t)|\leq K< \frac{1}{n}.\end{equation} 
For the description of the transformation we introduce \(\ww\) as the domain deformation-velocity \begin{equation}\label{defW}\ww:=\frac{\partial \dd}{\partial t}(y,t)\in  C^1(0,T;C^2(\overline{\Omega_0})).\end{equation}  
Since the reference domain \(\Omega_0\) is assumed to be Lipschitz and \(A_t\) is sufficiently smooth, \(\Omega_t\) is a Lipschitz domain as well.

For the description of the transformed terms we introduce the Jacobi matrix of \(A_t\)
\[\mathbb{J}=\mathbb{J}(y,t):=I+\left(\frac{\partial \dd^i}{\partial y_j}(y,t)\right)_{i,j=1,\dots,n}=:(\nabla_yA_t(y))^T\] as an (in general) non-symmetric \(n\times n\) matrix, where \(\nabla \vec{f}\) is understood as \((\partial_if_j)_{ij}\). We then set \[J=J(y,t):=\mathrm{det}(\mathbb{J})\] 
and introduce the well defined matrix (see Lemma \ref{invertible} below) \begin{equation}\label{defR}\mathbb{R}:=J\mathbb{J}^{-1}\end{equation}
with \(\det(\mathbb{R})=J^{n-1}\).

\subsection{Fluid motion}
In \(\Omega_t\) the fluid  moves with a fluid-velocity \(\vv_\varepsilon(x,t)=\vv_\varepsilon(x(t),t)\). 
Physically, we consider the motion of the fluid governed by the  Navier-Stokes equations for artificially compressible fluid with a constant density \(\rho\)
\begin{equation}\label{first equation}\partial_t \vv_\varepsilon +(\vv_\varepsilon\cdot\nabla )\vv_\varepsilon-2\frac{\mu}{\rho}\div[e(\vv_\varepsilon)]+\frac{1}{\rho}\nabla p_\varepsilon+\frac{1}{2}\vv_\varepsilon \div \vv_\varepsilon=\vec 0 \text{ in }\Omega_t,
\end{equation}
\begin{equation}\label{presure eq}
-\frac{\varepsilon}{\rho}\Delta p_\varepsilon +\div \vv_\varepsilon=0\text{ in }\Omega_t,\end{equation}
in the deforming domain \(\Omega_t\) (cf. \cite[Section III.8.1.1]{Temam}) with \(e(\vv_\varepsilon)=\frac{1}{2}(\nabla \vv_\varepsilon +(\nabla \vv_\varepsilon)^T)\) the symmetric deformation tensor. 

For \(\varepsilon >0\) in general  \(\div \vv_\varepsilon \neq 0\), which in turn implies 
a kind of artificial compressibility, whereas the fluid is incompressible in the limit \(\varepsilon \to 0\). 
An additional term \(\frac{1}{2} \vv_\varepsilon \div \vv_\varepsilon\) is included in \eqref{first equation} to balance the convective term \((\vv_\varepsilon \cdot \nabla) \vv_\varepsilon\) for the loss of the  solenoidal property as explained in Section \ref{bhat} below. Note that the term \(\frac{1}{2} \vv_\varepsilon \div \vv_\varepsilon\) vanishes for \(\varepsilon \to 0\) in the artificial compressibility formulation. This term will later be treated together with the nonlinear convective term \((\vv_\varepsilon \cdot \nabla)\vv_\varepsilon\). 

The equation \eqref{presure eq} is complemented by boundary and integrability conditions for the pressure 
\begin{equation}\label{eq:Normalization} \nabla p_\varepsilon\cdot \vec{\nn}=0\text{ on }\partial \Omega_t, \qquad\int_{\Omega_t}p_\varepsilon dx=0.\end{equation}
The velocity and pressure are  transformed  to the fixed reference domain \(\Omega_0\) as
\[\uu_\varepsilon(y,t):=\vv_\varepsilon(A_t(y),t)=\vv_\varepsilon(x(t),t), \quad q_\varepsilon(y,t):=p_\varepsilon(A_t(y),t)=p_\varepsilon(x(t),t).\]
In the following  sections, we will suppress that the velocity \(\vv=\vv_\varepsilon\) and the pressure \(p=p_\varepsilon\) depend on \(\varepsilon\) for easier readability. 

\subsection{Initial and boundary conditions}
\label{initcons}

The boundary conditions for the momentum equation are  considered as follows. 
We introduce  the  reference fluid-solid interface \[\Gamma_{\mathrm{FSI}}^0:=(A_t)^{-1}(\Gamma_{\mathrm{FSI}})\] as the back-transformation of the deformed part of the boundary to the reference domain. At the FSI-interface the kinematic coupling condition is imposed,
\begin{equation}\label{eq:w}\vv_\varepsilon|_{\Gamma_{\mathrm{FSI}}}=\uu_\varepsilon|_{\Gamma_{\mathrm{FSI}}^0}=\ww|_{\Gamma_{\mathrm{FSI}}^0}=\frac{\partial \dd}{\partial t}|_{\Gamma_{\mathrm{FSI}}^0}.\end{equation}

Moreover, we assume that the stress tensor \(\sigma_s\) of the solid acting on \(\Gamma_{\mathrm{FSI}}\)
 is given,\\ \(\sigma_s\in L^2(0,T;L^2(\Gamma_{\mathrm{FSI}}^0))\). Following the principle of continuity of stresses, we suppose 
\begin{equation}\label{tensor}\sigma_s\vec{\nn}=\sigma_f^0\vec{\nn},\end{equation} where \(\sigma_f^0\) denotes the back-deformed Cauchy fluid stress tensor to the reference domain. 

The fluid Cauchy stress tensor in the deforming domain reads
\begin{equation}\label{tensorf}\sigma_f:=-\mu(\nabla \vv_\varepsilon+\nabla \vv_\varepsilon^T)+ pI=-2\mu e(\vv_\varepsilon)+pI\end{equation}
and the transformation  to the fixed reference domain boundary part \(\Gamma_{\mathrm{FSI}}^0\) is by \eqref{eq:gradtransform} and \eqref{eq:tensortransform}    
\begin{equation}\label{tensor0}\sigma_f^0\big|_{\Gamma_{\mathrm{FSI}}^0}:=-\mu J^{-1} (\mathbb{R}^{T}\nabla_y\uu_\varepsilon  +  (\mathbb{R}^{T}\nabla_y\uu_\varepsilon )^T)\mathbb{R}^T+q_\varepsilon\mathbb{R}^T.\end{equation}

At the open (pervious) boundaries \(\Gamma_{\mathrm{in/out}}\) we impose the Neumann type boundary condition including the dynamic pressure \begin{equation}\label{kinematicp}2\mu (e(\vv_\varepsilon)\vec{\nn})\cdot \vec{\nn}-p_\varepsilon \bigg|_{\Gamma_{\mathrm{in}/\mathrm{out}}}=-p_{\mathrm{in}/\mathrm{out}}+\frac{\rho}{2}(\vv_\varepsilon \cdot \vec{\nn})^2\bigg|_{\Gamma_{\mathrm{in}/\mathrm{out}}},\quad \vv_\varepsilon\times \vec{\nn}|_{\Gamma_{\mathrm{in}/\mathrm{out}}}=0\end{equation}
to regulate the in-/outflow by given  boundary pressures $p_{\mathrm{in/out}}\in L^2(0,T; L^2(\Gamma_{\mathrm{in/out}}))$.
  
 At the fixed rigid wall-boundary 
$\Gamma_{\mathrm{wall}}$, we impose the no-slip boundary condition\begin{equation}\label{vwall}\uu_\varepsilon\big|_{\Gamma_{\mathrm{wall}}}=0.\end{equation} 
Finally, we impose that the fluid is at rest at the beginning,  \(\uu_\varepsilon(x,t=0)=0\). 
\medskip

The weak formulation of \eqref{first equation} - \eqref{presure eq} considering the above boundary and initial conditions is rewritten to the reference domain, using transformation of the integrals, integration by parts and the chainrule for derivatives,  some of the calculations are presented in  \eqref{eq:inttransformation} - \eqref{eq:tensortransform} in the Appendix.   
The transformed formulation is scaled with \(1/\rho\) and stated in the following definition. 

\begin{definition}\label{def:weak}
 Assume that the domain $\Omega_0$ deforms with deformation and velocity satisfying \eqref{ass:d} - \eqref{defW}. Assume  boundary data $p_{\mathrm{in/out}}\in L^2(0,T; L^2(\Gamma_{\mathrm{in/out}})), \sigma_{s}\in L^2(0,T; L^2(\Gamma_{FSI}^0))$.

The pair \((\uu_\varepsilon,q_\varepsilon)\),  \begin{equation}\label{assu}\uu_\varepsilon\in  L^2
(0,T; H^1(\Omega_0))\cap L^\infty(0,T; L^2(\Omega_0)), \quad q_\varepsilon\in L^2(0,T;H^1(\Omega_0)), \end{equation} is a weak solution to the \(\varepsilon\)-approximate fluid-flow problem \eqref{first equation} -   \eqref{presure eq} transformed to the reference domain \(\Omega_0\) satisfying the boundary conditions \eqref{eq:w}, \eqref{tensor}, \eqref{kinematicp}, \eqref{vwall} and the zero initial condition if it fulfills the following identity
\begin{align}\label{weakall}
0&= \int_0^T\bigg[-\int_{\Omega_0} \uu_\varepsilon \cdot (\partial_t J \psi+J\partial_t \psi) dy-\int_{\Omega_0}(\nabla_y\uu_\varepsilon)^T\mathbb{R}\ww\cdot\psi dy\nonumber\\
&\qquad+ \int_{\Omega_0}(\nabla_y\uu_\varepsilon)^T \mathbb{R}\uu_\varepsilon \cdot \psi dy+ \frac{1}{2}\int_{\Omega_0}\div(\mathbb{R}\uu_\varepsilon)\uu_\varepsilon \cdot \psi dy 
 \\ &\qquad+\frac{\mu}{\rho}\int_{\Omega_0}J^{-1}[\mathbb{R}^T\nabla_y\uu_\varepsilon dy+(\mathbb{R}^T\nabla_y\uu_\varepsilon)^T]:[\mathbb{R}^T\nabla_y\psi]dy-\int_{\Omega_0}\frac{q_\varepsilon}{\rho}\div(\mathbb{R}\psi)dy\nonumber\\&\qquad +\frac{\varepsilon}{\rho^2}\int_{\Omega_0}J^{-1}(\mathbb{R}^{T}\nabla q_\varepsilon)\cdot(\mathbb{R}^{T}\nabla\Phi)  dy+\frac{1}{\rho}\int_{\Omega_0}\div (\mathbb{R}\uu_\varepsilon) \Phi dy \nonumber\\&\qquad{ +\frac{1}{\rho}\int_{\Gamma_{\mathrm{FSI}}^0}\sigma_s\vec{\nn}\cdot \psi dS(y) + \int_{\Gamma_{\mathrm{in}/\mathrm{out}}}\frac{p_{\mathrm{in}/\mathrm{out}}}{\rho}\mathbb{R}^T\vec{\nn}\cdot \psi-\int_{\Gamma_{\mathrm{in}/\mathrm{out}}}\frac{(\uu_\varepsilon \cdot\psi)}{2}\mathbb{R}\uu _\varepsilon\cdot \vec{\nn}\bigg]dt}\nonumber\\ & \qquad +\int_{\Omega_0} J(T)\uu_\varepsilon(T)\cdot\psi(T)dy\nonumber
 \end{align}
for velocity testfunctions \(\psi\)  and  pressure testfunctions \(\Phi\) with
\begin{equation}\label{def:psi}\psi\in H^1(0,T;H^1(\Omega_0))
, \quad \psi|_{\Gamma_{\mathrm{wall}}}=0,\quad \Phi\in L^2(0,T;H^1(\Omega_0)).\end{equation} 
\end{definition}
 
Note that by assumption \(\uu_\varepsilon\times \vec{\nn}|_{\Gamma_{\mathrm{in}/\mathrm{out}}}=0\) the velocity \(\uu_\varepsilon\) is parallel to the exterior unit-normal \(\vec{\nn}\) at this boundary part, so that \(\vec{\nn}=\pm \frac{\uu_\varepsilon}{|\uu_\varepsilon|}\) with \(+\) sign at the outlet and \(-\) sign at the inflow part.  
Thus the corresponding boundary integral coming from the dynamic pressure condition \eqref{kinematicp} was replaced in \eqref{weakall}  by
\[\int_{\Gamma_{\mathrm{in}/\mathrm{out}}} (\uu_\varepsilon \cdot \vec{\nn})^2\mathbb{R}^T\vec{\nn}\cdot \psi dS(y)= \int_{\Gamma_{\mathrm{in}/\mathrm{out}}}(\uu_\varepsilon \cdot \psi)\mathbb{R}\uu_\varepsilon \cdot \vec{\nn} dS(y).\]
Moreover, the dynamic coupling condition of stress continuity  \eqref{tensor} is already applied in \eqref{weakall} resulting in a boundary integral containing \(\sigma_s\), while the kinetic coupling condition \eqref{eq:w} will be used  later,  cf. \eqref{eq:later} and \eqref{est:X}.

\section{Preliminaries} \label{sec:embedd}

In this notation the product \(a\cdot b:=a^Tb\) denotes the scalarproduct of vectors, the product \(A:B:=\mathrm{trace}(A^TB)\) the corresponding tensorproduct. The gradient of a scalar function \(\nabla f\) is a column-vector.  Correspondingly, for vector-valued \(\vec{f}=(f_1,\dots, f_n)^T\), we set \(\nabla \vec{f}:=(\partial_i f_j)_{i,j=1,\dots,n}\), the transposed Jacobian matrix. The divergence of a matrix is taken column-wise.

We further denote the \(\mathbb{R}^n\)-vector Euclid  norms by \(|\cdot|\) and  the standard  \(L^p\)- norms by\\ \(\|\uu\|_{L^p(\Omega_0)}:=(\int_{\Omega_0}|\uu (y)|^pdy)^{1/p}\) for \(p<\infty\) and \(\|\uu\|_{L^\infty(\Omega_0)}=\esssup_{y\in \Omega_0} |\uu(y)|\).
Correspondingly, for a  matrix-valued  function \(M\) we use the induced Frobenius-norm \(|M|_F\).
Where no confusion can arise, \(L^p\)-norms on the same set as in the first term in the  corresponding expression are  just written as \(\|\cdot\|_{L^p}\).

For simplificated notation, since \(J^{n-1}=\det(\mathbb{R}), \mathbb{J}=J\mathbb{R}^{-1}\), dependence  of constants on \(J\) and \(|\mathbb{J}|_F\) will be indicated as dependence on \(\mathbb{R}\). The dependence on  \(\mathbb{R}, \mathbb{R}^{-1}\) and the derivatives \(\partial_t\mathbb{R},  \partial_t\mathbb{R}^{-1},\nabla\mathbb{R}, \nabla\mathbb{R}^{-1}\) is in fact  a dependence on certain derivatives of the deformation \(\dd\) and we will use the simplified notation \(C(\mathbb{R}, \mathbb{R}^{-1},\partial_t\mathbb{R},  \partial_t\mathbb{R}^{-1},\nabla\mathbb{R}, \nabla\mathbb{R}^{-1})=:C(\mathbb{R},\partial_t\mathbb{R},\nabla\mathbb{R})\).

\subsection{Regularity of the ALE-map \(A_t\) and of  \(\mathbb{J}\)}
\begin{lemma}\label{invertible}
Suppose that \eqref{boundd2} is fulfilled, that is  \(|\partial_{y_i}\dd^j|\leq K< \frac{1}{n},\) then
\begin{enumerate}
    \item[(a)] \(\mathbb{J}=(\nabla_y A_t)^T\) and its determinant \(J\) are boundedly invertible and thus \(\mathbb{R}\) is well defined, cf. \eqref{defR}.
    \item[(b)] The ALE-map \(A_t\colon \Omega_0 \to \Omega_t\) is a local diffeomorphism.
\end{enumerate}
\begin{proof}
\begin{itemize}
\item[(a)] 
By the classical Gershgorin Circle Theorem \cite{Gershgorin}, the following spectral inclusion for the complex eigenvalues of \(\mathbb{J}\) is valid for \(n=2\),   \begin{equation}\label{Gershgorin1}\mathrm{spec}(\mathbb{J}(y,t))\subseteq B_{|\frac{\partial \dd^1}{\partial y_2}|}\left(1+\frac{\partial \dd^1}{\partial y_1}\right) \cup B_{|\frac{\partial \dd^2}{\partial y_1} |}\left(1+\frac{\partial \dd^2}{\partial y_2}\right)=:S\subseteq B_{2K}(1).\end{equation}
Here \(B_R(p)\) denotes the ball with center \(p\) and radius \(R\). 
If  \(K:=\mathrm{sup}_{i,j}|\frac{\partial \dd^i}{\partial y_j}|<1/2\), then \(\mathrm{dist}(0, S)\geq 1-2K>0\), that is \(0\notin\mathrm{spec}(\mathbb{J}(y,t))\). Thus  all spectral values are away from \(0\)  and thus \(\mathbb{J}(y,t)\) is  boundedly invertible,  \(\mathbb{J}^{-1}=J^{-1}\mathrm{cof}(\mathbb{J}^T)\) is uniformly bounded for \(t\in (0,T)\) and  and \(J>\mathrm{const}>0\). The inclusion \eqref{Gershgorin1} can be generalized  to the case of \(n=3\) with \(K<1/3\) and
\begin{equation}\label{Gershgorin2}\begin{aligned}\mathrm{spec}(\mathbb{J}(y,t))&
 =:S\subseteq B_{3K}(1).\end{aligned}\end{equation}
\item[(b)] The local invertibility of \(A_t\) follows  directly from the invertibility of \(\mathbb{J}\) by the classical Theorem on local invertibility. 
\end{itemize}
\end{proof}
\end{lemma}
Note that by assumption \eqref{boundd2} and the above lemma above \(A_t\) is a local diffeomorphism. However for our investigations it is necessary that \(A_t\) is a global diffeomorphism, see \eqref{ass:globdiff}, which is not guaranteed by \eqref{boundd2} and a stronger and harder to proof assumption. 

\subsection{Sobolev and boundary Imbeddings}

Next, we collect the following useful imbeddings for bounded Lipschitz-domains depending on the dimension for the case \(n=2\) or \(n=3\):
\begin{enumerate}
\item[\textbf{n=2:}]
One has that 
\(H^1(\Omega_0)\hookrightarrow L^q(\Omega_0),\) for \( 2\leq q< \infty\) (see \cite[Theorem II.2.4]{Galdi}).\\
In particular, by the Gagliardo-Nirenberg interpolation-inequality \cite[Exercise II.2.9]{Galdi},
\begin{equation}\label{n=2,Omega0}\|u\|_{L^4(\Omega_0)}\leq C(\Omega_0)\|u\|_{L^2(\Omega_0)}^{1/2}\|u\|_{H^1(\Omega_0)}^{1/2}\leq C(\Omega_0)\|u\|_{H^1(\Omega_0)}\text{ for } u\in H^1(\Omega_0).\end{equation}
For the boundary imbeddings we have  \(H^1(\Omega_0)\hookrightarrow L^q(\partial \Omega_0)\), for  \(1\leq q<\infty\). In particular, by \cite[Theorem II.3.1]{Galdi},  the following  estimates hold
\begin{equation}\label{n=2,deltaOmega3}\|u\|_{L^3(\partial \Omega_0)}\leq C(\Omega_0)\|u\|_{L^2(\Omega_0)}^{1/2}  \; \|u\|_{H^1(\Omega_0)}^{1/2}\leq C(\Omega_0)\|u\|_{H^1(\Omega_0)},
\end{equation}
and also 
\begin{equation}\label{n=2,deltaOmega2}\|u\|_{L^2(\partial  \Omega_0)}\leq C(\Omega_0) \|u\|_{L^2(\Omega_0)}\leq C(\Omega_0)\|u\|_{H^1(\Omega_0)},\end{equation} where we suppressed the trace operator in the notation.
\item[\textbf{n=3:}]
The Sobolev-imbeddings for bounded Lipschitz-domains (see \cite[Exercise II.2.9]{Galdi}) imply \(H^1(\Omega_0)\hookrightarrow L^q(\Omega_0),\) for \( 2\leq q \leq 6\) and in particular
\begin{equation}\label{n=3,Omega0}\|u\|_{L^4(\Omega_0)}\leq C(\Omega_0)\|u\|_{L^2(\Omega_0)}^{1/4} \| u\|_{H^1(\Omega_0)}^{3/4}\leq C(\Omega_0)\|u\|_{H^1(\Omega_0)}\text{ for } u\in H^1(\Omega_0).\end{equation} 
  By the boundary imbedding \cite[Theorem II.3.1]{Galdi}, we have \begin{equation}\label{n=3,deltaOmega4}\|u\|_{L^4(\partial\Omega_0)}\leq C(\Omega_0) \|u\|_{H^1(\Omega_0)}.\end{equation}
Moreover,  the imbedding chain \(L^4(\partial \Omega_0)\hookrightarrow L^3(\partial \Omega_0)\hookrightarrow L^2(\partial \Omega_0)\)  holds  
by the standard  H\"older-inequality for bounded surface measure of \(\partial \Omega_0\). 

In the case \(n=3\), the corresponding estimate  for \(L^2(\partial \Omega_0)\) can however be refined to
\begin{equation}\label{n=3,deltaOmega2}
\|u\|_{L^2(\partial \Omega_0)}\leq C(\Omega_0) \|u\|_{L^2(\Omega_0)}^{1/2}  \; \|u\|_{H^1(\Omega_0)}^{1/2}.
\end{equation} 
This estimate can be obtained following the lines of the proof of \cite[Theorem 1.5.1.10]{Grisvard} in combination with the generalized Poincar\'e inequality \eqref{GenPoinc}.
\item[\textbf{n=1:}]
With respect to the time-variable, the following  imbedding chain is useful \begin{equation}\label{d=1}H^1((0,T))\hookrightarrow C([0,T])\hookrightarrow L^p([0,T]),\quad 1\leq p\leq \infty,\end{equation} see \cite[Theorem 4.12]{Adams}.
\end{enumerate}

Note that the above imbedding  estimates are also valid with \(L^2\)-norms of  the gradient \(\nabla u\) instead of \(H^1\)-norms by the generalized Poincar\'e inequality \eqref{GenPoinc} in the case of zero boundary values on a part of the boundary.

\subsection{Continuity and coercivity of bilinear forms}

To simplify the notation, we introduce the following forms on \(H^1(\Omega_0)\):
\begin{definition}
We introduce a bilinear form on \(H^1(\Omega_0)\) corresponding to the transformed viscous term by 
 \begin{equation}\label{def:viscosform}((\uu, \psi)):=\frac{\mu}{\rho}\int_{\Omega_0}J^{-1}[\mathbb{R}^T\nabla_y\uu+(\mathbb{R}^T\nabla_y\uu)^T]:[\mathbb{R}^T\nabla_y\psi]dy.\end{equation}
The form \(((\,\cdot\,,\,\cdot\,))\) is bounded, symmetric and coercive.
 \end{definition} 
 The symmetry \(((\uu,\psi))=((\psi, \uu))\) follows directly from the rules for traces of square matrices: \(\mathrm{trace}((A+A^T)^TB)=\mathrm{trace}(A^TB+AB)=\mathrm{trace}(B^TA+BA)=\mathrm{trace}((B+B^T)^TA)\). The  boundedness of \eqref{def:viscosform} follows from 
 \[\begin{aligned}|((\uu,\psi))|&\leq C\left\|J^{-1}\right\|_{L^\infty(\Omega_0)}\left\|\mathbb{R}\right\|^2_{L^\infty(\Omega_0)}\left\|\nabla \uu\right\|_{L^2(\Omega_0)} \left\|\nabla \psi\right\|_{L^2(\Omega_0)}\\ &\leq C_1(\mathbb{R}) \left\|\uu\right\|_{H^1(\Omega_0)}\left\|\psi\right\|_{H^1(\Omega_0)}.\end{aligned}\]
To show the coercivity of $(( \cdot, \cdot ))$,  let us denote the transformed analogon of the symmetric gradient \(e(\vv)\), see \eqref{first equation},
\[\hat{e}(\uu):=\frac{\mathbb{R}^T\nabla_y\uu+(\mathbb{R}^T\nabla_y\uu)^T}{2}.\] 
Due to the symmetry of the form
and the fact that  \(\det(\mathbb{R})=J^{n-1}>\mathrm{const} \geq0\) by assumption \eqref{boundd2} on the deformation \(\dd\),
we can apply the generalized Korn-inequality  \cite[Corollary 4.1]{pompe} and the generalized Poincar\'e inequality \eqref{GenPoinc}, \cite[Corollary 4.5.2]{Ziemer},  to obtain the coercivity estimate 
\begin{equation}\label{coerciv1}((\uu,\uu))=\frac{2\mu}{\rho}\int_{\Omega_0}J^{-1}\hat{e}(\uu):\hat{e}(\uu)dy\geq C(\mathbb{R})\int_{\Omega_0}|\nabla \uu|^2 \geq c_1(\Omega_0,\mathbb{R})\|\uu\|_{H^1(\Omega_0)}^2.\end{equation}  

 \begin{definition} We  introduce a bilinear  form on \(H^1(\Omega_0)\)  
\begin{equation}\label{def:a1} a(q, \Phi):=\int_{\Omega_0}J^{-1}(\mathbb{R}^{T}\nabla q)\cdot (\mathbb{R}^{T}\nabla\Phi)dy\end{equation}associated with the pressure Laplacian term. This form is bounded, symmetric and coercive.
\end{definition}
The symmetry is obvious and the boundedness follows easily since
 \[|a(q, \Phi)| \leq \left\|\mathbb{R}\right\|_{L^\infty(\Omega_0)}^2\cdot \left\|J^{-1}\right\|_{L^\infty(\Omega_0)} \left\|\nabla q\right\|_{L^2(\Omega_0)} \left\|\nabla \Phi\right\|_{L^2(\Omega_0)}.\]
To see the coercivity, note that \(J^{-1}\mathbb{R}\mathbb{R}^{T}\) is symmetric. Moreover, due to Lemma \ref{invertible}, \(\mathbb{J}\), \(\mathbb{J}^{-1}\) and correspondingly \(\mathbb{R}, \mathbb{R}^{-1}\) are boundedly invertible with  spectra isolated  from \(0\). Thus, the singular values of \(\mathbb{R}\) 
 then are strictly positive.
Using the standard estimate for the Rayleigh-quotient (see e.g. \cite[Lemma 23.5]{Hanke}), one has
\[\begin{aligned}&J^{-1}(\mathbb{R}^{T}\nabla q)\cdot(\mathbb{R}^{T}\nabla q)
\geq 
(\inf J^{-1})  \mathbb{R}\mathbb{R}^{T}\nabla q\cdot\nabla q\\
&\geq (\inf J^{-1}) \inf \mathrm{spec}(\mathbb{R}\mathbb{R}^{T})\nabla q\cdot \nabla q=C(\mathbb{R})\left|\nabla q\right|^2\text{ for some } C>0.\end{aligned}\]In particular \(a(q,q)\geq C(\mathbb{R})\left\|\nabla q\right\|_{L^2(\Omega_0)}^2\). 
Since the pressure-average is normalized to zero, see \eqref{eq:Normalization}, 
 one can use the classic Poincar\'e-Wirtinger-inequality to obtain the coercivity \begin{equation}\label{coerciv2}a(q,q)\geq c_2(\Omega_0,\mathbb{R})\left\|q\right\|_{H^1(\Omega_0)}^2.\end{equation}

\subsection{Nonlinear convective terms}\label{bhat}
\begin{definition}
We introduce the nonlinear convective form transformed into \(\Omega_0\),
\begin{equation}\label{def:b} \hat b(\uu,\uu,\psi):= \int_{\Omega_0} (\nabla_y\uu)^T\mathbb{R}\uu \cdot \psi dy+
\frac{1}{2}\int_{\Omega_0}\div_y(\mathbb{R}\uu)\uu \cdot \psi dy.\end{equation}  
\end{definition}
Note that the form \eqref{def:b} contains the additional nonlinear term balancing the loss of\\ solenoidality of the weak solution approximation cf. \eqref{first equation} and lines thereafter.\\
We first state that \(\int_0^T |\hat b(\uu,\uu,\psi)(t)| dt
\) exists,  for \(\uu\) given in \eqref{assu} and \(\psi\) satisfying \eqref{def:psi}, since \(\mathbb{R}, \nabla_y \mathbb{R} \) are bounded (cf. \eqref{defR}, \eqref{ass:d}, Lemma \ref{invertible}).  Indeed, the first term in \eqref{def:b} can be estimated by 
\[\int_0^T\int_{\Omega_0} |(\nabla_y\uu)^T\mathbb{R}\uu \cdot \psi |dydt\leq C_1(\mathbb{R})\int_0^T \|\uu(t)\|_{L^4(\Omega_0)}\cdot \|\nabla \uu (t)\|_{L^2} \cdot \|\psi(t)\|_{L^4}dt. \]

In dimension \(n=2\), using the imbeddings \eqref{n=2,Omega0}, \eqref{d=1}, we further obtain
\[\begin{aligned} &\int_0^T \|\uu(t)\|_{L^4(\Omega_0)}\cdot \|\nabla \uu (t)\|_{L^2(\Omega_0)} \cdot \|\psi(t)\|_{L^4(\Omega_0)}dt\\ & \leq C_2(\Omega_0)\int_0^T\|\uu(t)\|_{L^2(\Omega_0)}^{1/2} \cdot\| \uu(t)\|_{H^1(\Omega_0)}^{3/2}\cdot  \| \psi(t)\|_{H^1(\Omega_0)} dt\\ 
& \leq  C_2(\Omega_0)\|\uu\|_{L^\infty(0,T;L^2(\Omega_0))}^{1/2}   \| \uu\|_{L^2(0,T;H^1(\Omega))}^{3/2} \| \psi\|_{C(0,T;H^1(\Omega_0))}.
\end{aligned}\]
In dimension \(n=3\), based on \eqref{n=3,Omega0} and \eqref{d=1},  we have
\begin{equation}\label{bn=3}\begin{aligned} &\int_0^T \|\uu(t)\|_{L^4(\Omega_0)}\cdot \|\nabla \uu (t)\|_{L^2(\Omega_0)} \cdot \|\psi(t)\|_{L^4(\Omega_0)}dt\\ & \leq C_2(\Omega_0)\int_0^T\|\uu(t)\|_{L^2(\Omega_0)}^{1/4} \cdot\|\uu(t)\|_{H^1(\Omega_0)}^{3/4}\cdot \| \uu(t)\|_{H^1(\Omega_0)}\cdot \|\psi(t)\|_{H^1(\Omega_0)} dt\\ 
&\leq C_2(\Omega_0)\|\uu\|_{L^\infty(0,T;L^2(\Omega_0))}^{1/4}  \| \uu\|_{L^2(0,T;H^1(\Omega_0))}^{7/4} \cdot \| \psi\|_{C(0,T;H^1(\Omega_0))}. \\
\end{aligned}\end{equation} 
The second term  \(\frac{1}{2}\int_0^T\int_{\Omega_0}|\div_y(\mathbb{R}\uu)\uu \cdot \psi|dydt\) of \eqref{def:b} can be estimated in an analogus way with a constant depending on \(\Omega_0, \mathbb{R}, \nabla_y \mathbb{R}\).

Applying integration by parts similar as in   \cite[Equation (2.12)]{anna1}, \cite[Remark 4]{Grandmont}, one gets  
\begin{equation}\begin{aligned}
\label{bterm}
& \hat b(\uu,\uu,\psi)=\\
&\frac1{2}\int_{\Omega_0}(\nabla_y \uu)^T \mathbb{R}\uu \cdot \psi dy 
-\frac1{2}\int_{\Omega_0}(\nabla_y \psi)^T \mathbb{R}\uu \cdot \uu dy 
+\frac1{2}\int_{\partial \Omega_0}(\uu\cdot \psi)\mathbb{R}\uu\cdot \vec{\nn} dS(y).\end{aligned}\end{equation}
 This implies the essential property of the cancellation of the volume integrals
\begin{equation}\label{simple1}
\begin{aligned}\hat b(\uu,\uu, \uu)&=\frac{1}{2}\int_{\partial \Omega_0}|\uu|^2 \mathbb{R} \uu \cdot \vec{\nn}dS(y)\\ &=
\frac{1}{2}\int_{ \Gamma_\mathrm{in/out}}|\uu|^2 \mathbb{R} \uu \cdot \vec{\nn}dS(y)+ \frac{1}{2}\int_{\Gamma_{\mathrm{FSI}}^0}|\ww|^2 \mathbb{R} \ww \cdot \vec{\nn}dS(y).
\end{aligned}\end{equation} 

\section{First a-priori estimates:}\label{sec:first a-proiri}

The natural function spaces for the weak solution \((\uu, q)\) from Definition \ref{def:weak} are confirmed by the following  a-priori estimates. The solutions themselfs are inserted into \eqref{weakall}.
In this case  the weak formulation \eqref{weakall} simplifies  as the two terms involving  the divergence 
cancel each other \[-\int_{\Omega_0}\frac{q}{\rho}\div(\mathbb{R}\psi)dy+\frac{1}{\rho}\int_{\Omega_0}\div (\mathbb{R}\uu) \Phi dy=0\] for  \(\psi=\uu \) and \(\Phi=q\). Consequently, we obtain the following simplified weak formulation where we replaced \(\uu =\ww\) at \(\Gamma_{\mathrm{FSI}}^0\) and applied the notations for forms  \eqref{def:viscosform}, \eqref{def:a1}:
\begin{eqnarray}\label{eq:later}
&0&=\int_{\Omega_0} J(T)|\uu|^2(T)dy +\int_0^T\bigg[-\int_{\Omega_0} \uu \cdot (\partial_t J \uu+J\partial_t \uu)dy-\int_{\Omega_0}(\nabla_y\uu)^T\mathbb{R}\ww\cdot\uu dy \nonumber\\ 
&&+((\uu, \uu)) +\frac{\varepsilon}{\rho^2} a(q,q)+\hat{b}(\uu,\uu,\uu)\\
&&+\frac{1}{\rho}\int_{\Gamma_{\mathrm{FSI}}^0}\sigma_s\vec{\nn}\cdot \ww dS(y)+ \int_{\Gamma_{\mathrm{in/out}}}\left(\frac{-1}{2}|\uu|^2\mathbb{R}\uu \cdot\vec{\nn} + \frac{p_{\mathrm{in}/\mathrm{out}}}{\rho}\mathbb{R}^T\vec{\nn}\cdot \uu\right) dS(y)\bigg]dt.\nonumber
\end{eqnarray}
We now formally substitute \(\partial_t \uu \cdot \uu=\frac{1}{2}\partial_t(|\uu|^2)\) in the second integral. Using integration by parts in time in the second term, 
\[-\int_0^T\uu\cdot \partial _t J \uu dt=-\int_0^T (\partial_tJ)|\uu|^2dt=-J|\uu|^2\big|_0^T+\int_0^T J \partial_t (|\uu|^2)dt,\]
the first term in \eqref{eq:later} vanishes and \(\frac{1}{2}\int_0^T\int_{\Omega_0}J\partial_t|\uu|^2 dydt\) remains on the right hand side.

As next, we rewrite \((\nabla_y\uu)\uu=\frac{1}{2}\nabla_y(|\uu^2|)\) to get a new representation for the third integral
\((\nabla_y\uu)^T\mathbb{R}\ww\cdot\uu=\mathbb{R}\ww\cdot(\nabla_y\uu)\uu=\mathbb{R}\ww\cdot \frac{1}{2}\nabla_y(|\uu^2|)\). Moreover, integration by parts of this integral cancels the corresponding  boundary term  \(-\int_{\Gamma_{\mathrm{FSI}}^0}\frac{1}{2}|\ww|^2\mathbb{R}\ww\cdot \vec{\nn} dS(y)\) over \(\Gamma_{\mathrm{FSI}}^0\) arriving in \(\hat{b}(\uu,\uu,\uu)\), cf. \eqref{simple1}. The remaining boundary terms 
 \(-\int_{\Gamma_{\mathrm{in/out}}\cup \Gamma_{\mathrm{wall}}}\frac{1}{2}|\uu|^2\mathbb{R}\ww\cdot \vec{\nn} dS(y)\) 
over the non-deforming boundary parts  $\Gamma_{\mathrm{in/out}}$ and $\Gamma_{\mathrm{wall}}$ disappear due to \(\uu|_{\Gamma_{\mathrm{wall}}}=0\), \(\ww|_{\Gamma_{\mathrm{in}/\mathrm{out}}}=0\).  
 In total, \eqref{eq:later} simplifies to
\begin{equation}\label{Spezialfall u}\begin{aligned}
0&= \int_0^T\bigg[\int_{\Omega_0}  \frac{J}{2}\partial_t (|\uu|^2)dy+\frac{1}{2}\int_{\Omega_0}\div_y(\mathbb{R}\ww)|\uu|^2dy +((\uu,\uu))\\ &\qquad+\frac{\varepsilon}{\rho^2} a(q,q) +\frac{1}{\rho}\int_{\Gamma_{\mathrm{FSI}}^0}\sigma_s\vec{\nn}\cdot \ww dS(y)+ \int_{\Gamma_{\mathrm{in}/\mathrm{out}}}\frac{p_{\mathrm{in}/\mathrm{out}}}{\rho}\mathbb{R}^T\vec{\nn}\cdot \uu dS(y)\bigg].
 \end{aligned}
\end{equation}
By the definition of the material (ALE-type) time-derivative \(D_t^A\)  in the deforming domain, we have \[\frac{1}{2} \int_{\Omega_0} J\partial_t(|\uu|^2)dy=:\frac{1}{2} \int_{\Omega_t}D^A_t(|\vv|^2)dx.\]
By the ALE-Transport Theorem \eqref{ALE} and the  transformations of integrals \eqref{eq:inttransformation}, \eqref{eq:divtransform}, we get
\[\begin{aligned} \frac{1}{2}\int_{\Omega_t}D^A_t(|\vv|^2)dx&=\frac{1}{2}\frac{d}{dt}\left(\int_{\Omega_t}|\vv|^2dx\right)-\frac{1}{2}\int_{\Omega_t}|\vv|^2\div_x \ww dx\\
&= \frac{1}{2}\frac{d}{dt}\left(\int_{\Omega_0}J|\uu|^2dy\right )-\frac{1}{2}\int_{\Omega_0}|\uu|^2\div_y(\mathbb{R}\ww) dy.
\end{aligned}\]
Since  \(\Omega_0\) does not depend on the time \(t\), we have \(\frac{d}{dt}\left(\int_{\Omega_0}J|\uu|^2dy\right)=\int_{\Omega_0}\partial_t(J|\uu|^2)dy\) and consequently, the first two integrals in \eqref{Spezialfall u} can be replaced by \(\frac{1}{2} \int_0^T\int_{\Omega_0} \partial_t(J|\uu|^2)dydt\). Finally, \eqref{Spezialfall u} can be rewritten as
\begin{equation}\label{Spezialfall u3}\begin{aligned}
0&= \int_0^T\bigg[\frac{1}{2}\int_{\Omega_0}\partial_t(J|\uu|^2)dy+((\uu,\uu))+ \frac{\varepsilon}{\rho^2} a(q,q)\\ & \qquad+\int_{\Gamma_{\mathrm{in}/\mathrm{out}}} \frac{p_{\mathrm{in}/\mathrm{out}}}{\rho}\mathbb{R}^T\vec{\nn}\cdot \uu dS(y)\bigg] dt+\int_0^T\frac{1}{\rho}\int_{\Gamma_{\mathrm{FSI}}^0}\sigma_s\vec{\nn}\cdot \ww dS(y),
\end{aligned}\end{equation} where the last term is given by the solid-stress action. 
With the initial condition \(\uu(t=0)=0\), Fubini and the fundamental theorem of calculus on the first term on the right, we arrive at
\begin{equation*}\label{Zeitintegral}\begin{aligned}
 &\frac{1}{2}\int_{\Omega_0}(J|\uu|^2)(T)dy+\int_0^T\bigg[((\uu,\uu))+ \frac{\varepsilon}{\rho^2} a(q,q)\bigg] dt\\ &=-  \int_0^T\int_{\Gamma_{\mathrm{in}/\mathrm{out}}} \frac{p_{\mathrm{in}/\mathrm{out}}}{\rho}\mathbb{R}^T\vec{\nn}\cdot \uu dS(y) dt-\int_0^T\int_{\Gamma_{\mathrm{FSI}}^0}\frac{1}{\rho}\sigma_s\vec{\nn}\cdot \ww dS(y)dt.
\end{aligned}\end{equation*} 
We now estimate the term including the given boundary pressure data using the imbedding    \eqref{n=2,deltaOmega2}, \eqref{n=3,deltaOmega4}, the generalized Poincar\'e-inequality and the Young-inequality. To this end, let \(\delta>0\) be arbitrary, then we can estimate
\[\begin{aligned}
& \left|\int_{\Gamma_{\mathrm{in}/\mathrm{out}}} \frac{p_{\mathrm{in}/\mathrm{out}}}{\rho}\mathbb{R}^T\vec{\nn}\cdot \uu dS(y)\right|\leq \frac{\|\mathbb{R}\|_{L^\infty(\overline{ \Omega_0})}}{\rho} \int_{\Gamma_{\mathrm{in}/\mathrm{out}}}|p_{\mathrm{in}/\mathrm{out}}|\; |\uu|dS(y)\\
& \leq C_1(\mathbb{R}) \|p_{\mathrm{in}/\mathrm{out}}\|_{L^{2}(\Gamma_{\mathrm{in}/\mathrm{out}})}\; \|\uu\|_{L^2(\Gamma_{\mathrm{in}/\mathrm{out}})}
\leq C_2(\Omega_0,\mathbb{R})\|p_{\mathrm{in}/\mathrm{out}}\|_{L^{2}(\Gamma_{\mathrm{in}/\mathrm{out}})}\; \|\uu\|_{H^1(\Omega_0)}\\
&\leq C_3(\Omega_0, \mathbb{R})\left(\frac{\delta}{2}\|\nabla\uu\|^2_{L^2(\Omega_0)}+\frac{1}{2\delta}\|p_{\mathrm{in}/\mathrm{out}}\|^2_{L^{2}(\Gamma_{\mathrm{in}/\mathrm{out}})}\right). \end{aligned}\]
We now use the coercivity \eqref{coerciv1}, \eqref{coerciv2} of the forms \(((\uu,\uu))\) and  \(a(q,q)\)  and choose \(\delta\)  small enough to  get 
\begin{equation*}\label{Zeitintegral2}\begin{aligned}
 &\frac{1}{2}\int_{\Omega_0}(J|\uu|^2)(T)+
\left(c_1- \frac{\delta C_3}{2}\right)\int_0^T \|\nabla \uu\|_{L^2(\Omega_0}^2 dt+c_2\varepsilon \int_0^T\|\nabla q\|_{L^2(\Omega_0)}^2dS(y)dt\\ &\leq \int_0^T\frac{1}{\rho}\left[\int_{\Gamma_{\mathrm{FSI}}^0}-\sigma_s\vec{\nn}\cdot \ww dS(y)+ \frac{C_3}{2\delta}\|p_{\mathrm{in}/\mathrm{out}}\|^2_{L^{2}(\Gamma_{\mathrm{in}/\mathrm{out}})}\right]dt
\end{aligned}\end{equation*} 
Since the right-hand side is bounded by given data, \(J>0\), and \(c_1- \frac{\delta C_3}{2}>0\) for sufficiently small \(\delta\), we arrive at 
\begin{equation}\begin{aligned}\label{Schranke}&\frac{1}{2}\|\uu(T)\|_{L^2(\Omega_0)}^2+C \|\uu\|_{L^2(0,T;H^1(\Omega_0))}^2+ C \|\sqrt{\varepsilon}  q\|^2_{L^2(0,T;H^1(\Omega_0))} \\ & \leq \mathrm{const}\big(\|\sigma_s\|_{L^2(0,T; L^2(\Gamma_{\mathrm{FSI}}^0))},\|p_{\mathrm{in}/\mathrm{out}}\|_{L^2(0,T;L^2(\Gamma_{\mathrm{in/out}}))},\|\ww\|_{L^2(0,T;L^2(\Gamma_{\mathrm{FSI}^0}))} \big).\end{aligned}\end{equation}
Note that \eqref{Schranke} is valid also for any \(0\leq t\leq T\) instead of \(T\), so that the a-priori estimate (\ref{Schranke}),  gives uniform bounds in \(\varepsilon\) for 
\begin{eqnarray} \label{Aestimes_first_u}
\uu_\varepsilon &\in& L^\infty(0,T;L^2(\Omega_0))\cap L^2(0,T;H^1(\Omega_0)), \\ \label{Aestimes_first_q}
\sqrt{\varepsilon}q_\varepsilon &\in& L^2(0,T;H^1(\Omega_0)).
\end{eqnarray}
Finally, from \eqref{Aestimes_first_u} and  \eqref{Aestimes_first_q}  the weak convergence  of  the solution sequences
$\uu_\varepsilon \rightharpoonup  \uu_0$ and $ \ \sqrt{\varepsilon}q_\varepsilon  \rightharpoonup q_0$ 
in \(L^2(0,T;H^1(\Omega_0))\)  follows as  \(\varepsilon\to 0\). The limiting process is shown in  Section \ref{limit}. 

Note that, due to the non-linearity of the problem, the stated weak convergences are not sufficient and strong convergences in appropriate spaces are necessary for the limiting process in the weak formulation \eqref{weakall}. The compactness of the solution sequence for the velocity \(\uu_\varepsilon\) can be proven  based on the equicontinuity estimate, proven in the next section.
The limiting process is then finalised in Section \ref{limit}.

\section{The second a-priori estimates: 'equicontinuity'
}\label{sec:second}
This section is devoted to the equicontinuity estimate for the velocity approximation \(\uu_\varepsilon\) with respect to time, stated in Theorem \ref{thm:Main1}.
We consider the parameter \(\varepsilon\)  to be fixed, but show that the corresponding estimate is independent from \(\varepsilon\).
\begin{theorem}\label{Mainthm}Let the assumptions in Definition \ref{def:weak}  on the given data be satisfied. 
Let \(\uu=\uu_\varepsilon\) be a weak solution to \eqref{weakall},
and in the case of \(n=3\), in addition suppose  that \(\uu_\varepsilon \in L^4(0,T; L^4(\Omega_0))\) is bounded. Then the estimate 
\begin{equation}\label{secondestimate}\int_0^{T-\tau}\int_{\Omega_0}|\sqrt{J(y,s+\tau)}\uu_\varepsilon(y,s+\tau)-\sqrt{J(y,s)}\uu_\varepsilon(y,s)|^2 dyds\leq C\tau\end{equation}
 holds independently on $\varepsilon$ and \(\tau\), whereby the constant $C=C(\mathbb{R}, \mathbb{R}^{-1},\partial_t \mathbb{R},\partial_t \mathbb{R}^{-1},\nabla \mathbb{R},\nabla\mathbb{R}^{-1})$ depends additionally on \(n,\Omega_t\), the given data \(\sigma_s,\ww, p_{\mathrm{in}/\mathrm{out}}\) and in dimension \(n=3\) on the bound of  \(\|\uu_\varepsilon\|_{L^4(0,T,L^4( \Omega_0))}\). 

\end{theorem}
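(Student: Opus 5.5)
We follow the Alt--Luckhaus strategy. We test the weak formulation \eqref{weakall} on the time interval $(s,s+\tau)$ with a test function built from the time difference of the solution. To keep the test function admissible and compatible with the divergence structure, we transport it through the Piola transformation. Set
\[
\mathbf{z}(y,s):=\mathbb{R}(y,s+\tau)\uu(y,s+\tau)-\mathbb{R}(y,s)\uu(y,s).
\]
This is the difference of the Piola transforms $\mathbb{R}\uu$, the quantities that carry the divergence $\div(\mathbb{R}\uu)$ appearing in \eqref{weakall}. For $t\in(s,s+\tau)$ we then take the velocity test function $\psi(y,t)=\mathbb{R}(y,t)^{-1}\mathbf{z}(y,s)$. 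Since $\div(\mathbb{R}\psi(t))=\div \mathbf{z}(s)$ is independent of $t$, the divergence of the test function is the same with respect to the coordinates at time $s$ and at time $s+\tau$. This is the divergence-preserving property. For the pressure we take $\Phi=q(\cdot,s+\tau)-q(\cdot,s)$, possibly rescaled, so that the pressure term $-\int q\,\div(\mathbb{R}\psi)$ pairs with $\int \div(\mathbb{R}\uu)\Phi$ in the same way as in the first a-priori estimate.

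Step 1 is to use \eqref{weakall} localized in time to $(s,s+\tau)$, which is legitimate by a standard cut-off/Steklov argument. The time-derivative part then yields
\[
\int_{\Omega_0}\big(J\uu\cdot\psi\big)\Big|_{s}^{s+\tau}dy-\int_s^{s+\tau}\!\!\int_{\Omega_0}\uu\cdot J\,\partial_t\psi\,dy\,dt .
\]
Here $\partial_t\psi=(\partial_t\mathbb{R}^{-1})\mathbf{z}$ is bounded by $C(\partial_t\mathbb{R})|\mathbf{z}|$. In the boundary term, $J\uu\cdot\mathbb{R}^{-1}\mathbf{z}$ evaluated at $s+\tau$ minus the same at $s$ gives $\mathbf{z}\cdot \mathbb{J}^{-T}\ldots$. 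To make this a clean square, we rewrite it as $|\sqrt{J}\uu(s+\tau)-\sqrt{J}\uu(s)|^2$ plus error terms involving $\mathbb{R}(s+\tau)-\mathbb{R}(s)$ and $J(s+\tau)-J(s)$. These errors are $O(\tau)$ in $L^\infty$ by \eqref{ass:d}, so after integration they contribute at most $C\tau\,\|\uu\|_{L^\infty L^2}^2$. If the algebra does not close exactly, we will instead test with $\psi=\mathbb{J}^{-1}\ldots$ variants chosen so that the leading term reads exactly $\int |\sqrt{J}\uu(s+\tau)-\sqrt{J}\uu(s)|^2$ up to such $O(\tau)$ corrections.

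Step 2 is to integrate the resulting identity over $s\in(0,T-\tau)$. Every remaining term has the form $\int_0^{T-\tau}\int_s^{s+\tau}F(t,s)\,dt\,ds$, and we bound each one by $C\tau$ using Fubini. The viscous term is bounded by $\int\!\int \|\nabla\uu(t)\|_{L^2}\|\nabla \psi(s)\|_{L^2}$, and $\|\psi(s)\|_{H^1}\le C(\mathbb{R},\nabla\mathbb{R})(\|\uu(s+\tau)\|_{H^1}+\|\uu(s)\|_{H^1})$; Young's inequality together with Fubini, which turns $\int_0^{T-\tau}\int_s^{s+\tau}$ into at most $\tau\int_0^T$, gives $C\tau\|\uu\|_{L^2H^1}^2$. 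The convection term $\hat b$ and the $\ww$-term are handled as in Section \ref{bhat}. For $n=2$ we use \eqref{n=2,Omega0}. For $n=3$ we use the assumed $L^4L^4$ bound to write $\|\uu\|_{L^4}\|\nabla\uu\|_{L^2}\|\psi\|_{L^4}$, estimate the product via Hölder in time with exponents $(4,2,4)$, and apply Fubini. The in/out boundary integral, including the cubic term $(\uu\cdot\psi)\mathbb{R}\uu\cdot\nn$, is estimated by \eqref{n=2,deltaOmega3} and \eqref{n=3,deltaOmega4}, again using the $L^4$ trace bound with the $L^2H^1$ bound. The $\sigma_s$ and $p_{\mathrm{in/out}}$ terms are linear in the data and in $\psi$, so they are bounded by $C\tau$ through Cauchy--Schwarz.

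The main obstacle is Step 3, the pressure terms, which must be controlled uniformly in $\varepsilon$ although only $\sqrt\varepsilon q$ is bounded. With the choice above, $\div(\mathbb{R}\psi)=\div\mathbf{z}(s)$. The $q$-term therefore becomes $-\int_s^{s+\tau}\int q(t)\,\div \mathbf{z}(s)$. From the pressure equation, $\div(\mathbb{R}\uu)=-\frac{\varepsilon}{\rho}\nabla\!\cdot\!(J^{-1}\mathbb{R}\mathbb{R}^T\nabla q)$ weakly, so this integral can be rewritten as $\varepsilon$ times a pairing of $\nabla q(t)$ with $\nabla q(s+\tau)-\nabla q(s)$, weighted by $J^{-1}\mathbb{R}\mathbb{R}^T$ (via $a(\cdot,\cdot)$). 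Its absolute value is then at most $C\varepsilon\int\!\int\|\nabla q(t)\|(\|\nabla q(s+\tau)\|+\|\nabla q(s)\|)$, which is $\le C\tau\,\|\sqrt\varepsilon q\|_{L^2H^1}^2$ by \eqref{Aestimes_first_q} and Fubini. The delicate point is that the test function at time $t$ is not the solution at time $t$, so the cancellation of Section \ref{sec:first a-proiri} is not exact. We have to verify that the mismatch is precisely of this $\varepsilon\,a(q(t),q(s+\tau)-q(s))$ form, and that the weight changes $\mathbb{R}(t)$ versus $\mathbb{R}(s)$ only produce further $O(\tau)$ terms.

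Collecting Steps 1--3 gives \eqref{secondestimate}, with a constant depending only on the bounds \eqref{Aestimes_first_u} and \eqref{Aestimes_first_q}, on the data, on the deformation quantities $\mathbb{R},\partial_t\mathbb{R},\nabla\mathbb{R}$, and for $n=3$ on the $L^4L^4$ bound.
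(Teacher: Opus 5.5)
Your skeleton matches the paper's in most respects:
\begin{itemize}
\item the same Piola-type test function $\psi(t;s)=\mathbb{R}^{-1}(t)[(\mathbb{R}\uu)(s+\tau)-(\mathbb{R}\uu)(s)]$;
\item the same reduction of the endpoint term to $|\sqrt{J}\uu(s+\tau)-\sqrt{J}\uu(s)|^2$ plus an error. Its matrix coefficient $2\sqrt{J(s)J(s+\tau)}\,I-\mathbb{J}(s+\tau)\mathbb{R}(s)-\mathbb{R}(s+\tau)^T\mathbb{J}(s)^T$ vanishes at $\tau=0$ and is therefore $O(\tau)$ in $L^\infty$, so the algebra does close. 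You only dropped the harmless $\partial_tJ\,\psi$ part of $\partial_t(J\psi)$;
\item the same Steklov/Fubini bounds for the viscous, $\ww$-, $\sigma_s$- and $p_{\mathrm{in}/\mathrm{out}}$-terms and for the volume part of $\hat b$.
\end{itemize}

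The genuine gap is the cubic dynamic-pressure term $-\frac12\int_{\Gamma_{\mathrm{in}/\mathrm{out}}}(\uu\cdot\psi)\,\mathbb{R}\uu\cdot\nn\,dS$ for $n=3$. Since $\psi(t;s)$ is built from $\uu(s)$ and $\uu(s+\tau)$, you must control $\int_0^{T-\tau}\int_s^{s+\tau}\int_{\Gamma_{\mathrm{in}/\mathrm{out}}}|\uu(t)|^2\,(|\uu(s)|+|\uu(s+\tau)|)\,dS\,dt\,ds$. The trace bounds you invoke only put boundary values in $L^2(0,T;L^4(\partial\Omega_0))$ via \eqref{n=3,deltaOmega4} and in $L^4(0,T;L^2(\partial\Omega_0))$ via \eqref{n=3,deltaOmega2}. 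For any H\"older split based on these two bounds (and their interpolants), the reciprocal time exponents of the three factors sum to at least $5/4$. Over the strip $\{s<t<s+\tau\}$ this gives at best $C\tau^{3/4}$, not $C\tau$. For example, $\|\uu(t)\|^2_{L^4(\partial\Omega_0)}\|\uu(s)\|_{L^2(\partial\Omega_0)}$ gives $\tau^{3/4}\|\uu\|^2_{L^2H^1}\|\uu\|_{L^4L^2(\partial\Omega_0)}$. The $L^4(0,T;L^4(\Omega_0))$ hypothesis never enters this way. For $n=2$ your direct estimate works.

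The paper never estimates this term. It rewrites $\hat b$ via the integration-by-parts identity \eqref{bterm}, whose boundary integral over $\partial\Omega_0$:
\begin{itemize}
\item cancels the cubic term exactly on $\Gamma_{\mathrm{in}/\mathrm{out}}$;
\item vanishes on $\Gamma_{\mathrm{wall}}$;
\item on $\Gamma_{\mathrm{FSI}}^0$ involves only $\uu=\ww$ and $\psi$ expressed through $\ww$, i.e.\ data (term (VII)).
\end{itemize}
The remaining volume terms $\frac12\int(\nabla_y\uu)^T\mathbb{R}\uu\cdot\psi-\frac12\int(\nabla_y\psi)^T\mathbb{R}\uu\cdot\uu$ are $O(\tau)$ using $L^4L^4$ and $L^2H^1$. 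If you prefer to keep your direct route, you must bring the $L^4L^4$ bound to the boundary. The $W^{1,1}$-trace inequality applied to $|\uu|^3$ gives $\int_0^T\|\uu\|^3_{L^3(\partial\Omega_0)}dt\le C\big(\|\uu\|^3_{L^3L^3}+\|\uu\|^2_{L^4L^4}\|\nabla\uu\|_{L^2L^2}\big)$. The pointwise inequality $|\uu(t)|^2|\uu(\theta)|\le\frac23|\uu(t)|^3+\frac13|\uu(\theta)|^3$ then yields $C\tau$.

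Your Step 3, by contrast, closes and is more direct than the paper's argument. The pressure equation (the weak form with $\psi=0$) holds for a.e.\ fixed time. Testing it at $s+\tau$ and at $s$ with $Q(s)=\int_s^{s+\tau}q\,dt$ gives $-\int_{\Omega_0}Q(s)\,\div\mathbf{z}(s)\,dy=\frac{\varepsilon}{\rho}\big[a_{s+\tau}(q(s+\tau),Q(s))-a_s(q(s),Q(s))\big]$, where $a_\theta$ denotes $a$ with coefficients at time $\theta$. Each term is bounded separately by $C\varepsilon\|q(\theta)\|_{H^1}\int_s^{s+\tau}\|q(t)\|_{H^1}dt$, so no comparison of weights at different times is needed. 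The $\Phi$-terms cancel identically by the same equation, so the choice of $\Phi$ is irrelevant. The paper instead takes $\Phi=q(s)-q(s+\tau)$ so that the two divergence terms form the exact derivative $-\partial_s(Q\,\div U)$. It then bounds the endpoint terms (III), (IV) and the term (IX) separately.
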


\begin{proof}
We  start averaging the  weak formulation \eqref{weakall},  building the moving average in time,   by considering the time  integration between \(s\), \(s+\tau\), \(s\in (0, T-\tau)\) and test against suitable testfunctions given below, a technique previously used in the works \cite[Appendix A]{anna2},  \cite[Lemma 9]{Grandmont}, \cite[Lemma 10]{Grandmont08}.
The averaged weak formulation for a moving time-interval 
reads
\begin{eqnarray}\label{A.1.5}
&&0= -\int_s^{s+\tau}\int_{\Omega_0} \uu\cdot \partial_t(J\psi)dydt -\int_s^{s+\tau}\int_{\Omega_0}(\nabla \uu)^T\mathbb{R}\ww\cdot\psi dydt\nonumber \\ &&\qquad + \int_{\Omega_0}[J\uu(s+\tau)\psi(s+\tau)-J\uu(s)\psi(s)]dy\\ &&+\int_s^{s+\tau}\Bigg[\hat b(\uu,\uu,\psi)+((\uu,\psi))+\frac{\varepsilon}{\rho^2}a(q,\Phi) +\frac{1}{\rho}\int_{\Omega_0}\div (\mathbb{R}\uu) \Phi dy-\int_{\Omega_0}\frac{q}{\rho}\div(\mathbb{R}\psi)dy\nonumber\\ &&+\frac{1}{\rho}\int_{\Gamma_{\mathrm{FSI}}^0}\sigma_s\vec{\nn}\cdot \psi dS(y)+\int_{\Gamma_{\mathrm{in}/\mathrm{out}}} \left(\frac{p_{\mathrm{in}/\mathrm{out}}}{\rho}\mathbb{R}^T\vec{\nn}\cdot \psi - \frac{(\uu \cdot \psi)}{2}\mathbb{R}\uu\cdot \vec{\nn} \right)dS(y)\Bigg]dt.\nonumber
\end{eqnarray}
Another integration  over time,  \(\int_0^{T-\tau} ds\) applying \eqref{def:viscosform}, \eqref{def:a1} yields
\begin{eqnarray}\label{A.2}
&&-\int_0^{T-\tau}\int_{\Omega_0}[J\uu(s+\tau)\psi(s+\tau;s)-J\uu(s)\psi(s;s)]dyds\nonumber\\&&= \int_0^{T-\tau}\bigg\{\int_s^{s+\tau}\int_{\Omega_0} -\uu\cdot  \partial_t(J\psi)(t;s)dydt -\int_s^{s+\tau}\int_{\Omega_0}(\nabla \uu)^T\mathbb{R}\ww \cdot \psi(t;s)dydt\nonumber
 \\  &&\qquad+\int_s^{s+\tau}\Bigg[\hat b(\uu,\uu,\psi) +\int_{\Omega_0}\frac{1}{\rho}\div (\mathbb{R}\uu) \Phi dy-\int_{\Omega_0}\frac{q}{\rho}\div(\mathbb{R}\psi)dy
 \\ &&+\frac{\mu}{\rho}\int_{\Omega_0}J^{-1}[\mathbb{R}^T\nabla_y\uu+(\mathbb{R}^T\nabla_y\uu)^T]:[\mathbb{R}^T\nabla_y\psi]dy+\frac{\varepsilon}{\rho^2}\int_{\Omega_0}J^{-1}(\mathbb{R}^{T}\nabla q)\cdot (\mathbb{R}^{T}\nabla\Phi)  dy\nonumber\\ &&+\frac{1}{\rho}\int_{\Gamma_{\mathrm{FSI}}^0}\sigma_s\vec{\nn}\cdot \psi dS(y)+\int_{\Gamma_{\mathrm{in}/\mathrm{out}}} \left(\frac{p_{\mathrm{in}/\mathrm{out}}}{\rho}\mathbb{R}^T\vec{\nn}\cdot \psi-\frac{(\uu \cdot \psi)}{2}\mathbb{R}\uu\cdot \vec{\nn} \right)dS(y)\Bigg]dt\bigg\}ds.\nonumber
 \end{eqnarray}
To prove \eqref{secondestimate}, we chose 
special testfunctions:
\begin{equation}\label{TestF}\begin{aligned}\psi(t,y;s)&:=\R^{-1}(y,t)[(\R \uu)(y,s+\tau)-(\R \uu)(y,s)],
\\\Phi(y;s)&=\Phi(t,y;s):=q(y,s)-q(y,s+\tau). 
\end{aligned}\end{equation}
This particular choice of the velocity testfunction \(\psi\) in \eqref{A.2} contains a stationary part of the difference of weak solutions at \(s\) and \(s+\tau\) and a time dependent  factor \(\mathbb{R}^{-1}(y,t)\) (compare  \cite[(A.6)]{anna2}).
 The Piola Transformation of the velocity  preserves the divergence operator \(\div_y\) during the transformation from one set of time dependent coordinates to another \footnote{Note that the divergence operator of the velocity in the domain \(\Omega_t\), \(\div_x v\), is dependent on the domain deformation,  and thus time dependent, see \eqref{eq:divtransform} in the Appendix.} in the sense of \eqref{eq:divtransform}, see also  \cite[Section 6]{Padula}. The pressure testfunction \(\Phi\) is chosen stationary including the time points \(s,s+\tau\).

With this particular choice of testfunctions, in the following,  we will simplify and split the individual terms of \eqref{A.2} into the left-hand side of and  eleven right hand side terms (I) to (XI). We show that the right-hand side terms can be bounded by \(C\tau\) in both cases \(n=2\) and \(n=3\).

The left-hand side of \eqref{A.2} with the testfunctions as in \eqref{TestF} turns into \begin{equation}\begin{aligned}\label{A9links}
&-\int_0^{T-\tau}\int_{\Omega_0}[J\uu(y,s+\tau)\cdot\psi(y,s+\tau;s)-J\uu(y,s)\cdot\psi(y,s;s)]dyds	\\
&= -\int_0^{T-\tau}\int_{\Omega_0}[J\uu(s+\tau)\cdot \uu(s+\tau)-J\uu(s+\tau)\cdot\R^{-1}(s+\tau)(\R \uu)(y,s)\\ & \qquad-J\uu(s)\cdot\R^{-1}(y,s)(\R\uu)(y, s+\tau)+J\uu(s)\cdot \uu(s)] dyds.
 \end{aligned}\end{equation} 
We now shift one of the \(\mathbb{R}^{-1}\) matrices factors to the other side in the scalar products of the second and third term, use the identity \(J\mathbb{R}^{-T}=\mathbb{J}^T\) and the binomial formula to rewrite \eqref{A9links} in order to finally  obtain the left hand side of \eqref{secondestimate}, 
\begin{eqnarray}\label{A9links2}
&&\hspace{-18pt}-\int_0^{T-\tau}\int_{\Omega_0}|\sqrt{J}\uu(s+\tau)|^2+|\sqrt{J}\uu(s)|^2-\mathbb{J}^T\uu(s+\tau)\cdot \R \uu(s)-\mathbb{J}^T\uu(s)\cdot \R\uu(s+\tau)dyds\nonumber\\
&&=-\int_0^{T-\tau}\int_{\Omega_0}|\sqrt{J}\uu(s+\tau)-\sqrt{J}\uu(s)|^2+2\sqrt{J}\uu(s+\tau)\cdot\sqrt{J}\uu(s)\nonumber\\ &&\qquad\qquad-\mathbb{J}^T\uu(s+\tau)\cdot \R \uu(s)-\mathbb{J}^T\uu(s)\cdot \R\uu(s+\tau)dyds\\
&&=-\int_0^{T-\tau}\int_{\Omega_0}|\sqrt{J}\uu(s+\tau)-\sqrt{J}\uu(s)|^2 dyds- (I),\nonumber
\end{eqnarray}
where the remaining term reads
\[\begin{aligned}(I):=
&\int_0^{T-\tau}\int_{\Omega_0}\big[2\sqrt{J}\uu(s+\tau)\cdot\sqrt{J}\uu(s)\\ &-\mathbb{J}^T\uu(s+\tau)\cdot (\sqrt{J}\mathbb{J}^{-1})(\sqrt{J}\uu)(s)-\mathbb{J}^T\uu(s)\cdot(\sqrt{J}\mathbb{J}^{-1})(\sqrt{J}\uu)(s+\tau)\big]dyds\end{aligned}\]
by replacing \(\mathbb{R}=\sqrt{J} \mathbb{J}^{-1}\sqrt{J}\). 

We rewrite the first term in \((I)\) again so that \(\sqrt{J}\mathbb{J}^{-1}\) appears in the second factor 
\[\begin{aligned}&2(\sqrt{J}\uu)(s+\tau)\cdot (\sqrt{J}\uu)(s)\\ &=\uu(s+\tau)\cdot(\mathbb{J}\sqrt{J}\mathbb{J}^{-1})(s+\tau)(\sqrt{J} \uu)(s)+ \uu(s)\cdot (\mathbb{J}\sqrt{J}\mathbb{J}^{-1})(s)(\sqrt{J}\uu)(s+\tau)\\&=(\mathbb{J}^T\uu)(s+\tau)\cdot \sqrt{J}\mathbb{J}^{-1}(s+\tau)(\sqrt{J}\uu)(s)+(\mathbb{J}^T\uu)(s)\cdot \sqrt{J}\mathbb{J}^{-1}(s)(\sqrt{J}\uu)(s+\tau)\end{aligned}\]
and introduce the matrix \(\mathbb{M}(y,s):=\int_s^{s+\tau} (\sqrt{J} \mathbb{J}^{-1})(y,z)dz\) with the property   \[\frac{\partial}{\partial s}\mathbb{M}(y,s)=(\sqrt{J} \mathbb{J}^{-1})(y,s+\tau)-(\sqrt{J} \mathbb{J}^{-1})(y,s)=\int_s^{s+\tau}\frac{\partial}{\partial z}\left((\sqrt{J} \mathbb{J}^{-1})(y,z)\right) dz\] using the Leibnitz-formula for derivatives of integrals and the fundamental theorem. So \((I)\) can be rearranged as
\[\begin{aligned}(I)
&=\int_0^{T-\tau}\int_{\Omega_0}(\mathbb{J}^T\uu)(s+\tau)\cdot(\partial_s\mathbb{M}\sqrt{J}\uu)(s)-(\partial_s\mathbb{M}^T\mathbb{J}^T\uu)(s)\cdot(\sqrt{J}\uu)(s+\tau) dyds.\end{aligned}\]
The estimate for the Frobenius-norm of \(\partial_s\mathbb{M}\) yields
 \[\begin{aligned}|\partial_s\mathbb{M}(s)|_F\leq  \int_s^{s+\tau}\left|\frac{\partial}{\partial_z}\left(\sqrt{J(z)}\mathbb{J}^{-1}(z)\right)\right|_Fds\leq C(\mathbb{R},\mathbb{R}^{-1},\partial_t\mathbb{R}, \partial_t\mathbb{R}^{-1})\int_s^{s+\tau} 1 ds \leq C\tau,\end{aligned}\] 
since 
\(\mathbb{J}^{-1}\) can be explicitly written in terms of \(\dd\in C^2(0,T,C^2(\overline{\Omega_0}))\) and  \(\nabla_y \dd\) both having bounded time-derivatives. 
Correspondingly for fixed finite \(T\) and bounded \(\Omega_0\) one has that 
\begin{equation}\label{est:III}\begin{aligned}|(I)|&\leq \int_0^{T-\tau}\int_{\Omega_0}\big(|\mathbb{J}^T|_F|\uu|\big)(s+\tau)\;\big(|\partial_s\mathbb{M}|_F|\sqrt{J}|\; |\uu|\big)(s)\\&\qquad+\big(|\partial_s\mathbb{M}^T|_F|\mathbb{J}^T|_F|\uu|\big)(s)\;\big(|\sqrt{J}|\;|\uu|\big)(s+\tau) dyds\\
&\leq C(\mathbb{R},\mathbb{R}^{-1},\partial_t \mathbb{R}, \partial_t \mathbb{R}^{-1})\tau\|\uu\|_{L^\infty(0,T;L^2(\Omega_0))}^2.\end{aligned}\end{equation}
The next term  related to the time derivative of \(J\psi\) in \eqref{A.2}, with the particular choice of testfunctions  \eqref{TestF}, is given by
\[\begin{aligned}(II)&:=\int_0^{T-\tau}\int_s^{s+\tau}\int_{\Omega_0}  \uu\cdot \partial_t(J\psi)(y,t;s)dydtds\\ & =  \int_0^{T-\tau}\int_s^{s+\tau}\int_{\Omega_0} \uu(t)\cdot \partial_t \mathbb{J}(t)[(\mathbb{R}\uu)(s+\tau)-(\mathbb{R}\uu)(s)]dydtds, \end{aligned}\] 
where we recall \(J\mathbb{R}^{-1}=\mathbb{J}\) and \(|\partial_t\mathbb{J}|_F\leq C(\mathbb{R},\mathbb{R}^{-1},\partial_t\mathbb{R}, \partial_t\mathbb{R}^{-1})=:C(\mathbb{R},\partial_t\mathbb{R})\) holds in short notation by the assumptions, see \eqref{ass:d} and Lemma \ref{invertible}. We estimate \((II)\) using the Cauchy-Schwarz-inequality 
\begin{equation}\label{est:IV}\begin{aligned}|(II)|
& \leq C(\mathbb{R},\partial_t\mathbb{R}) \int_0^{T-\tau}\left[\|\uu(s+\tau)\|_{L^2(\Omega_0)}+\|\uu(s)\|_{L^2(\Omega_0)}\right]\int_s^{s+\tau}  \|\uu(t)\|_{L^2(\Omega_0)}\;dtds\\
& \leq C(\mathbb{R},\partial_t\mathbb{R}) \tau \|\uu\|_{L^\infty(0,T;L^2(\Omega_0))}^2 .\end{aligned}\end{equation}
Next, we turn to the divergence terms on the right hand side of \eqref{A.2}. We factor out \(\frac{1}{\rho}\) and interchange the integration w.r.t. \(t,s\) 
\[\begin{aligned}\int_0^{T-\tau} \int_{\Omega_0}\int_s^{s+\tau}[\div_y (\mathbb{R}\uu) \Phi(s)- q\div_y(\mathbb{R}\psi(t;s))]dtdyds.
\end{aligned}\] 
Note that by the choice of the testfunction  \eqref{TestF},  \(\Phi\) does not depend on the time \(t\) and thus can be pulled out of the inner time integral. Moreover, the \(\div_y\)-operator is time- independent as well, and can be put also in front of that integral.
Inserting correspondingly \(\psi\),  the factor \(\mathbb{R}(y,t)\) cancels in the second term, so that  another time-independent term can be moved outside the  integral w.r.t. the time \(t\) and we get 
\[\begin{aligned}&\int_0^{T-\tau} \int_s^{s+\tau}\int_{\Omega_0}[\div_y (\mathbb{R}\uu) \Phi(s)- q\div_y(\mathbb{R}\psi(t;s))]dydtds\\
&=\int_0^{T-\tau}\int_{\Omega_0}\left[\div_y\left(\int_s^{s+\tau} (\mathbb{R}\uu)(y,t)dt\right)(q(y,s)-q(y,s+\tau))\right.\\ &\qquad\left. -\big[\div_y\big((\R \uu)(y,s+\tau)-(\R \uu)(y,s)\big)\big]\int_s^{s+\tau}q(t)dt\right]dyds.
\end{aligned}\]
By the following  abbreviations for the time integrals \[U(y,s):=\int_s^{s+\tau}\R(y,t)\uu(y,t)dt,\qquad Q(y,s):= \int_s^{s+\tau}q(y,t)dt,\]
with the property \(\frac{\partial}{\partial_s}U(y,s)=(\R \uu)(y,s+\tau)-(\R \uu)(y,s), \frac{\partial}{\partial_s}Q(y,s)=q(y,s+\tau)-q(y,s)\), that is valid almost everywhere by the Lebesgue Differentiation Theorem, we can reformulate the  difference of the divergence term as follows
\[\begin{aligned}&\int_0^{T-\tau} \int_s^{s+\tau}\int_{\Omega_0}\left[\div_y (\mathbb{R}\uu) \Phi(s)- q\div_y(\mathbb{R}\psi(t;s))\right]dydtds\\
&=-\int_0^{T-\tau}\int_{\Omega_0}\left[\frac{\partial}{\partial_s}Q(s)\div_yU(s)+ Q(s)\div_y\frac{\partial}{\partial_s}U(s)\right]dyds\\&= \int_0^{T-\tau}\int_{\Omega_0}-\frac{\partial}{\partial_s}(Q \div_y U)(s) dyds\qquad 
\\&=
\int_{\Omega_0}\big([Q \div_y U](0)-[Q \div_y U](T-\tau)\big) dy
\\&
=\int_{\Omega_0} \int_0^\tau q(t)dt\;\int_0^\tau \div_y (\R \uu)(t)dtdy-\int_{T-\tau}^Tq(t) dt\; \int_{T-\tau}^T\div_y(\R \uu) (t)dt dy\\
&=: (III)-(IV), \end{aligned}\]
where we replaced \(Q(0)=\int_0^\tau q(t)dt\), \(Q(T-\tau)=\int_{T-\tau}^Tq(t)dt\) and analogusly for \(U\).
The terms \((III), (IV)\) differ only in the time interval of size \(\tau\) considered, so that they can be treated in the same way. The first integral, which is independent on \(t\),
\[(III)=\int_{\Omega_0}Q(0)\int_0^\tau \div_y (\R \uu)(t)dt dy\] 
shows up in the weak formulation \eqref{A.1.5} for \(s=0\) if we  multiply with \(\rho\) and test with  a zero-velocity testfunction \(\psi\) and  constant averaged pressure \(\Phi\): 
\begin{equation}\label{testp1}\;\psi:=0,\; \Phi(y):=Q(y,0)=\int_0^\tau q(y,t) dt.\end{equation} 
In this case we get from \eqref{A.1.5} using \eqref{def:a1} that 
\[\begin{aligned}0&= \frac{\varepsilon}{\rho} \int_0^\tau a\left(q(y,t), Q(y,0) \right) dt+\int_0^\tau\int_{\Omega_0}\div_y(\R \uu)Q(y,0)dy dt
\\ &=:\int_0^\tau \frac{\varepsilon}{\rho} \int_{\Omega_0}J^{-1}\mathbb{R}^{T}\nabla q(y,t)\cdot(\mathbb{R}^{T}(y,t) \nabla Q(y,0) )dydt+(III)
.\end{aligned}\]
Consequently, we can estimate \((III)\) by  sorting and using the Cauchy-Schwarz inequality,  \[\begin{aligned}|(III)|&\leq\frac{1}{\rho}\int_0^\tau \|\R\|_\infty^2 \|J^{-1}\|_\infty \|\sqrt{\varepsilon} \nabla_yq\|_{L^2(\Omega_0)}\; \left\|\int_0^\tau\sqrt{\varepsilon}  \nabla_y q dt \right\|_{L^2(\Omega_0)} dt\\
&\leq  C(\mathbb{R}) \tau^{1/2}\|\sqrt{\varepsilon} \nabla_y q\|_{L^2(0,T; L^2(\Omega_0))}\; \left\| \int_0^\tau \sqrt{\varepsilon}\nabla_y q dt \right\|_{L^2(\Omega_0)}.
\end{aligned}\]
For the square of the last factor the Cauchy-Schwarz inequality implies 
\[\begin{aligned}\left(\left\|\int_0^\tau \sqrt{\varepsilon}\nabla_y q dt\right\|_{L^2(\Omega_0)}\right)^2
\leq \tau\|\sqrt{\varepsilon}q\|_{L^2(0,T; H^1(\Omega_0))}^2\end{aligned}\]
and get finally
\begin{equation}\label{est:I+II}|(III)|\leq\tau C(\mathbb{R})\|\sqrt{\varepsilon}q\|_{L^2(0,T; H^1(\Omega_0))}^2.\end{equation}
Recall that the boundedness of \(\|\sqrt{\varepsilon}q\|_{L^2(0,T; H^1(\Omega_0))}\|\) follows from the first a-priori estimate \eqref{Schranke}. The estimate \(|(IV)|\leq\tau  C(\mathbb{R})\|\sqrt{\varepsilon}q\|_{L^2(0,T; H^1(\Omega_0))}^2\) can then be obtained in the same way, with \((T-\tau,T)\) as interval of size \(\tau\) instead of \((0,\tau)\). \medskip

We now turn to the estimate of the viscous and convective terms in \eqref{A.2} with testfunctions \eqref{TestF}. 
\[\begin{aligned}(V)&:=((\uu, \psi))\\ &=
\frac{\mu}{\rho} \int_0^{T-\tau}\int_s^{s+\tau} \int_{\Omega_0}J^{-1}[\mathbb{R}^T\nabla_y \uu+(\mathbb{R}^T\nabla_y \uu)^T]:[\mathbb{R}^T\nabla_y \psi](y,t;s)dydtds.\end{aligned}\]
  To express the gradient the special testfunction \(\psi \) in \eqref{TestF},   we use the product-rule twice for \(\theta=s\) or \(\theta=s+\tau\), respectively to calculate the components of the matrix \((\nabla_y \psi)_{i,j}\):
\[\begin{aligned}&\partial_{y_i}\big[\R^{-1}(y,t)(\R \uu)(y,\theta)\big]_j
=\sum_{k,l}\Big( (\partial_{y_i} \mathbb{R}^{-1}_{jk})(y,t)[\mathbb{R}_{kl}(y,\theta) \uu_l(y,\theta)]\\ & \qquad +  (\mathbb{R}^{-1}_{jk})(y,t)(\partial_{y_i} \mathbb{R}_{kl})(y,\theta)\uu_l(y,\theta) +   (\mathbb{R}^{-1}_{jk})(y,t)\mathbb{R}_{kl}(y,\theta) (\partial_{y_i}\uu_l)(y,\theta) \Big).
\end{aligned}\]
By assumption on the deformation \(\dd\), all components of \(\mathbb{R},\mathbb{R}^{-1}\) and the corresponding derivatives with respect to \(y_i\) are all bounded. One then has
\[|\partial_{y_i}[\R^{-1}(y,t)(\R \uu)(y,\theta)]_j|\leq C \sum_l |\uu_l(y,\theta)|+|\partial_{y_i}\uu_l(y,\theta)|\] with \(C=C(\mathbb{R}, \mathbb{R}^{-1}, \nabla \mathbb{R}, \nabla \mathbb{R}^{-1})=:C(\mathbb{R}, \nabla \mathbb{R})\) in short notation and consequently \begin{equation}|\label{est:psi}\nabla_y\psi(y,t;s)|_F\leq C(|\uu(y,s)|+|\uu(y,s+\tau)|+|\nabla_y \uu(y,s)|_F+|\nabla_y\uu(y,s+\tau)|_F
).\end{equation}
We use the boundedness of \(J^{-1}\), the individual entries of \(\mathbb{R}\) and the above estimates to conclude:
\[\begin{aligned}&|(V)| \leq C(\mathbb{R})\int_0^{T-\tau}\int_s^{s+\tau}\int_{\Omega_0}|\mathrm{trace}([\mathbb{R}^T\nabla_y \uu+(\mathbb{R}^T\nabla_y \uu)^T]\mathbb{R}^T\nabla_y \psi)|dydtds\\
&\leq C(\mathbb{R})\int_0^{T-\tau}\int_s^{s+\tau}\int_{\Omega_0}|\nabla_y \uu|_F|\nabla_y \psi|_Fdydtds\\
&\leq C(\mathbb{R}, \nabla \mathbb{R})\int_0^{T-\tau}\int_s^{s+\tau} \|\nabla_y\uu(t)\|_{L^2(\Omega_0)}\Big(\|\uu(s)\|_{H^1(\Omega_0)}+\|\uu(s+\tau)\|_{H^1(\Omega_0)}\Big)dtds\\
&=C(\mathbb{R}, \nabla \mathbb{R})\int_0^{T-\tau}  \Big(\|\uu(s)\|_{H^1(\Omega_0)}+\|\uu(s+\tau)\|_{H^1(\Omega_0)}\Big) \tau\frac{1}{\tau} \int_s^{s+\tau}\|\nabla_y\uu(t)\|_{L^2(\Omega_0)} dt ds.
\end{aligned}\]
We introduce \([\varphi]_\tau(s):=\frac{1}{\tau}\int_s^{s+\tau} \varphi(t)dt\) as the Steklov-average of \(\varphi\). 
We apply H\"older twice and use the Steklov-average property \eqref{stek1} for \(\varphi=\| \nabla_y \uu\|_{L^2(\Omega_0)}\)  to further estimate \((V)\) by
\begin{equation}\label{est:V}
\begin{aligned}|(V)| 
\leq C(\mathbb{R}, \nabla \mathbb{R})\tau \|\uu\|_{L^2(0,T;H^1(\Omega_0))}^2 .\end{aligned}\end{equation}
\medskip
Next, we estimate the nonlinear \(\hat{b}\)-term together with the boundary term over  \(\Gamma_{\mathrm{in/out}}\), which arises from the dynamic pressure condition. Note that this boundary integral in the weak formulation \eqref{A.2} is cancelled due to the partial integration, see \eqref{bterm}. The resulting nonlinear terms are split into volume and boundary terms, i.e.,  \[\int_0^{T-\tau}\int_s^{s+\tau} \hat b(\uu,\uu,\psi)-\frac{1}{2}\int_{\Gamma_{\mathrm{in/out}}} (\uu \cdot \psi)\mathbb{R}\uu\cdot \vec{\nn} dS(y)dtds=:(VI)+(VII),\] 
where
\[(VI):=\int_0^{T-\tau}\int_s^{s+\tau}\int_{\Omega_0}\frac{1}{2}[(\nabla_y\uu)^T \mathbb{R}\uu\cdot \psi-(\nabla_y \psi)\mathbb{R}\uu\cdot \uu]dydtds\]
and 
\[(VII):=\int_0^{T-\tau}\int_s^{s+\tau}\frac{1}{2}\int_{\Gamma_{\mathrm{FSI}}^0}(\ww\cdot \psi)\mathbb{R}\ww\cdot \vec{\nn} dS(y)dtds\] by
using \(\uu=\ww\) on \(\Gamma_{\mathrm{FSI}}^0\), \(\uu=0\) on \(\Gamma_{\mathrm{wall}}\), cf. the boundary terms in \eqref{bterm}.
We use the boundedness of \(\mathbb{R},  \mathbb{R}^{-1}\) and their space-derivatives, cf. \eqref{est:psi}, and the H\"older inequality to estimate correspondingly 
\begin{equation}\label{**}\begin{aligned}&|(VI)|\leq C(\mathbb{R}) \int_0^{T-\tau}\int_s^{s+\tau}\int_{\Omega_0} |\uu|\;\Big((\nabla_y \uu)|_F |\psi| +|(\nabla_y \psi)|_F |\uu| \Big)dydtds
\\ & \leq C(\mathbb{R}, \nabla \mathbb{R}) \int_0^{T-\tau}\int_s^{s+\tau} \|\uu(t)\|_{L^4(\Omega_0)}\;\|\nabla_y\uu(t)\|_{L^2}\; \Big(\|\uu(s+\tau)\|_{L^4}+\|\uu(s)\|_{L^4}\Big)\\ & \qquad +\|\uu(t)\|_{L^4(\Omega_0)}\;\|\nabla_y \psi(t)\|_{L^2}\;\|\uu(t)\|_{L^4}dtds.
\end{aligned}\end{equation}
Due to the different Sobolev-imbeddings  \eqref{n=2,Omega0}, \eqref{n=3,Omega0} we  have to distinguish the cases of dimension \(n=2\) and \(3\).
Recall that for the case \(n=3\), we additionally assume
\begin{equation}\label{uniformL4L4}\|\uu\|_{L^4(0,T;L^4(\Omega_0))}\leq C,
\end{equation}
which holds automatically in 
the case of \(n=2\) by the a-priori estimate in \eqref{Aestimes_first_u} and the Sobolev-imbedding \eqref{n=2,Omega0}.

We start with the case of \(n=2\). Set \(\theta\in \{s,s+\tau\}\) arbitrary  to estimate the first summand of \eqref{**} by the imbedding of \(L^4(\Omega_0)\), cf. \eqref{n=2,Omega0}, as follows:
\[\begin{aligned}&\int_0^{T-\tau}\int_s^{s+\tau} \|\uu(t)\|_{L^4(\Omega_0)}\;\|\nabla\uu(t)\|_{L^2}\;\|\uu(\theta)\|_{L^4}dtds
\\ & \leq C(\Omega_0)\|\uu\|_{L^\infty(0,T; L^2(\Omega_0))}\; \int_0^{T-\tau}\int_s^{s+\tau} \|\uu(t)\|_{H^1(\Omega_0)}^{1/2} \; \|\uu(t)\|_{H^1} \; \|\uu(\theta)\|_{H^1}^{1/2}dtds.
\end{aligned}\]
Now, we apply the Young-inequality \(|ab|\leq \frac{1}{p}|a|^p+\frac{p-1}{p}|b|^{\frac{p}{p-1}},  1<p<\infty\) with \\ \(a=\|\uu (t)\|_{H^1(\Omega_0)}^{1/2}, b=  \; \|\uu(t)\|_{H^1(\Omega_0)}^{3/2}\) and \(p=4\), to obtain 
\[\begin{aligned}&\int_0^{T-\tau}\int_s^{s+\tau} \|\uu(t)\|_{H^1(\Omega_0)}^{3/2} \; \|\uu(\theta)\|_{H^1}^{1/2}dtds\leq 
\int_0^{T-\tau}\int_s^{s+\tau}\frac{1}{4}\| \uu(\theta)\|_{H^1(\Omega_0)}^2+\frac{3}{4}\| \uu(t)\|_{H^1}^{2}dtds \\ &  \leq \frac{1}{4}\int_0^{T-\tau}\| \uu(\theta)\|_{H^1}^{2} \int_s^{s+\tau} 1dt ds+\frac{3}{4}\tau\int_0^{T-\tau}\frac{1}{\tau}\int_s^{s+\tau}\|\uu(t)\|_{H^1(\Omega_0)}^2dtds.
\end{aligned}\]
The first term is bounded by \(\frac{\tau}{4}\|\uu\|_{L^2(0,T,H^1(\Omega_0))}\), the second is estimated  with the help of  the Steklov-average property \eqref{stek1} for \(\varphi(t):=\|\uu(t)\|_{H^1(\Omega_0)}^2\). We get in the second term 
\begin{equation}\begin{aligned}
\label{est:stek}\int_0^{T-\tau}\frac{1}{\tau}\int_s^{s+\tau}\|\uu(t)\|_{H^1(\Omega_0)}^2dtds&=\|[\varphi]_\tau\|_{L^1(0,T-\tau)}\\ &\leq \|\varphi\|_{L^1(0,T)}= \|\uu\|_{L^2(0,T; H^1(\Omega_0))}^2.\end{aligned}\end{equation} By  \eqref{est:stek},  the first summand of \eqref{**} can  be estimated  for \(n=2\) by 
\[\begin{aligned}&C(\mathbb{R})\int_0^{T-\tau}\int_s^{s+\tau} \|\uu(t)\|_{L^4(\Omega_0)}\; \|\nabla\uu(t)\|_{L^2}\; \|\uu(\theta)\|_{L^4}dtds\\ &\leq C(\Omega_0,\mathbb{R})\tau \|\uu\|_{L^\infty(0,T; L^2(\Omega_0))} \; \|\uu\|_{L^2(0,T;H^1(\Omega_0))}^2.\end{aligned}\]
The second summand of \eqref{**} can be estimate in a similar way using \eqref{n=2,Omega0}, \eqref{est:psi}
\[\begin{aligned}& C(\mathbb{R})\int_0^{T-\tau}\int_s^{s+\tau}\|\uu(t)\|_{L^4(\Omega_0)}^2\; \|\nabla\psi(t)\|_{L^2}dtds\\ &
\leq C(\Omega_0,\mathbb{R})\int_0^{T-\tau}\int_s^{s+\tau}\|\uu(t)\|_{L^2(\Omega_0)}\; \|\nabla \uu(t)\|_{L^2}\; \|\nabla\psi(t)\|_{L^2} dtds
\\& \leq C(\Omega_0, \mathbb{R}, \nabla \mathbb{R})\|\uu\|_{L^\infty(0,T; L^2(\Omega_0))} \\ &\hspace{90pt} \int_0^{T-\tau}\int_s^{s+\tau}\|\nabla  \uu (t)\|_{L^2(\Omega_0)} \;(\| \uu(s+\tau)\|_{H^1}+\| \uu(s)\|_{H^1}) dtds.
\end{aligned}\]
Here we use the Young-inequality for \(p=2\) and the H\"older-inequality to get correspondingly using a similar Steklov-average property as in \eqref{est:stek}
\[\begin{aligned} &\int_0^{T-\tau}\int_s^{s+\tau}\|\nabla \uu (t)\|_{L^2(\Omega_0)} \;\big(\| \uu(s+\tau)\|_{H^1(\Omega_0)}+\| \uu(s)\|_{H^1(\Omega_0)}\big) dtds\\ &
\leq \int_0^{T-\tau}\int_s^{s+\tau}\| \uu(t)\|_{H^1(\Omega_0)}^2+\frac{1}{2}\|\uu(s+\tau)\|_{H^1(\Omega_0)}^2+\frac{1}{2}\| \uu(s)\|_{H^1(\Omega_0)}^2
dtds\\ & \leq 2\tau  \; \|\uu\|_{L^2(0,T;H^1(\Omega_0))}^2.\end{aligned}\]
The whole estimate for \((VI)\) in the case \(n=2\) reads in the short notation 
\begin{equation}\label{est:6n=2}
|(VI)|\leq C(\Omega_0,\mathbb{R}, \nabla \mathbb{R}) \tau \big(\|\uu\|_{L^\infty(0,T; L^2(\Omega_0))} \|\uu\|_{L^2(0,T; H^1(\Omega_0))}^2\big).
\end{equation}
In the case \(n=3\) we proceed in a similar way using the  \(L^4-\)boundedness of \(\uu\), the Steklov properties as in \eqref{est:stek}, applying \eqref{stek1} 
with \(\varphi(t)=\| \uu(t)\|_{L^4(\Omega_0)}^4 \) and the Young inequality with \(p=2\) twice to estimate the first summand in \eqref{**}, 
\begin{equation}
\begin{aligned}&C(\mathbb{R})\int_0^{T-\tau}\int_s^{s+\tau} \|\uu(t)\|_{L^4(\Omega_0)}\; \|\nabla\uu(t)\|_{L^2}\; \|\uu(\theta)\|_{L^4}dtds\\ &\leq C(\mathbb{R}) \int_0^{T-\tau}\int_s^{s+\tau} \frac{1}{2}\|\uu(t)\|_{H^1(\Omega_0)}^2+\frac{1}{4}\|\uu(t)\|_{L^4}^4+ \frac{1}{4}\|\uu(\theta)\|_{L^4}^4 dtds\\ &
\leq C(\mathbb{R})\tau \big(\|\uu\|_{L^2(0,T;H^1(\Omega_0))}^2+\|\uu\|_{L^4(0,T;L^4(\Omega_0))}^4\big).
\end{aligned}\end{equation}
The second summand of \eqref{**} is estimated similarly  with \eqref{est:psi}  and applying  \eqref{stek1} for \(\varphi=\|\uu\|_{L^4(\Omega_0)}^4\), \[\begin{aligned}& C(\mathbb{R})\int_0^{T-\tau}\int_s^{s+\tau}\|\uu(t)\|_{L^4(\Omega_0)}^2\; \|\nabla\psi(t)\|_{L^2}dtds\\ &
\leq C(\mathbb{R})\int_0^{T-\tau}\int_s^{s+\tau}\frac{1}{2}\|\uu(t)\|_{L^4(\Omega_0)}^4+\frac{1}{2}\; \|\nabla\psi(t)\|_{L^2}^2dtds
\\&\leq C(\mathbb{R},\nabla\mathbb{R})\int_0^{T-\tau}\int_s^{s+\tau}\|\uu(t)\|_{L^4(\Omega_0)}^4+\|\uu(s)\|_{H^1}^2+\|\uu(s+\tau)\|_{H^1}^2\; dtds\\
& \leq C(\mathbb{R},\nabla \mathbb{R})\tau \big(\|\uu\|_{L^2(0,T;H^1(\Omega_0))}^2+\|\uu\|_{L^4(0,T;L^4(\Omega_0))}^4\big).
\end{aligned}\]
The whole estimate of \((VI)\) in the case \(n=3\) reads finally 
\begin{equation}
|(VI)|\leq C(\mathbb{R},\mathbb{R}^{-1},\nabla \mathbb{R},\nabla \mathbb{R}^{-1})\tau \big(\|\uu\|_{L^2(0,T;H^1(\Omega_0))}^2+\|\uu\|_{L^4(0,T;L^4(\Omega_0))}^4\big)\end{equation}
Note that the power 4 of \(\|\uu\|_{L^4(\Omega_0)}\) in the estimates above  arises naturally, however, by the Sobolev-imbedding for \(n=3\) in \eqref{n=3,Omega0}, one would obtain  \(\|\uu\|_{L^4(\Omega_0)}^4\leq C\|\uu\|_{L^2(\Omega_0)} \|\uu\|_{H^1(\Omega_0)}^3\), which is not bounded in the weak solution space  \(L^\infty(0,T;L^2(\Omega_0))\cap L^2(0,T;H^1(\Omega_0))\),  see \eqref{assu}. Thus \(\uu\in L^4(0,T,L^4(\Omega_0))\) is a reasonable necessary assumption in Theorem \ref{Mainthm}. 

We now focus on the boundary part \((VII)\) of the nonlinear  term  \(\hat b(\uu,\uu,\psi)\).
Using our assumptions on the boundedness of \(\mathbb{R},\mathbb{R}^{-1}\) in \(\psi\) defined by \eqref{TestF} and the H\"oder inequality, one has 
\[\begin{aligned} |&(VII)|\leq C(\mathbb{R})  \int_0^{T-\tau}\int_s^{s+\tau} \|\ww(t)\|_{L^3(\Gamma_{\mathrm{FSI}}^0)}\;\|\psi(t)\|_{L^3(\Gamma_{\mathrm{FSI}}^0)}\;\|\ww(t)\|_{L^3(\Gamma_{\mathrm{FSI}}^0)}dtds\\ &
\leq C(\mathbb{R},\nabla \mathbb{R})  \int_0^{T-\tau}\int_s^{s+\tau} \|\ww(t)\|_{L^3(\partial \Omega_0)}^2\;\Big(\|\ww(s+\tau)\|_{L^3(\partial  \Omega_0)}+\|\ww(s)\|_{L^3(\partial  \Omega_0)}\Big)dtds.
\end{aligned}\]
Here, we can treat the cases \(n=2\) and \(n=3\) simultaneously since by imbedding \(\|\ww\|_{L^3(\partial \Omega_0)}\leq C(\Omega_0) \|\ww\|_{H^1(\Omega_0)}\), see \eqref{defW},  \eqref{n=2,deltaOmega3}, \eqref{n=3,deltaOmega4}. 
We use the Young-inequality with \(p=3\)  twice and the Steklov-property \eqref{stek1}  for \(\varphi(t)=\|\ww(t)\|_{H^1(\Omega_0)}^3\) to get

\begin{eqnarray}\label{est:VII} &\hspace{-9pt}|(VII)|& \leq C(\Omega_0, \mathbb{R},\nabla \mathbb{R}) \int_0^{T-\tau}\int_s^{s+\tau} \frac{4}{3}\|\ww(t)\|_{H^1(\Omega_0)}^{3} + \frac{\|\ww(s+\tau)\|_{H^1}^3}{3}+\frac{\|\ww(s)\|_{H^1}^3}{3}dtds\nonumber\\
&&  \leq C(\Omega_0,\mathbb{R},\nabla \mathbb{R})\tau \|\ww\|_{L^3(H^1(\Omega_0))}^{3}\leq C(\Omega_0, \mathbb{R})\tau \|\ww\|_{C^1(0,T;C^2(\overline{\Omega_0}))}.
\end{eqnarray}  
This concludes the estimate of the convective term and its boundary part.
Next, we set for the ALE-term coming from the transformation of the time derivative
\[\begin{aligned}(VIII) &:= \int_0^{T-\tau}\int_s^{s+\tau}\int_{\Omega_0} (\nabla_y \uu)^T \mathbb{R}\ww\cdot \psi dydtds.\end{aligned}\]
By the H\"older inequality and the boundedness of \(\mathbb{R}\) and  \(\|\ww\|_{C(0,T,L^\infty(\overline{\Omega_0}))}\),  we have the estimate
\[\begin{aligned} &|(VIII)|\leq \int_0^{T-\tau}\int_s^{s+\tau}\int_{\Omega_0} |\nabla_y\uu|_F\; |\mathbb{R}|_F \; |\ww|\;|\psi| dydtds
\\&\leq C(\mathbb{R},\mathbb{R}^{-1},\ww)\int_0^{T-\tau}\int_s^{s+\tau}\|\uu(t)\|_{H^1(\Omega_0)}\Big(\|\uu(s+\tau)\|_{L^2(\Omega_0)}+\|\uu(s)\|_{L^2(\Omega_0)}\Big)\; dtds\\
&\leq C(\mathbb{R},\mathbb{R}^{-1},\ww)\tau\int_0^{T-\tau}\Big(\|\uu(s+\tau)\|_{L^2(\Omega_0)}+\|\uu(s)\|_{L^2(\Omega_0)}\Big) \left[\|\uu\|_{H^1(\Omega_0)}\right]_\tau(s) ds.\\
\end{aligned}\]
Here we apply the Steklov-average property \eqref{stek1}  for \(\varphi(t):=\|\uu(t)\|_{H^1(\Omega_0)}\) and the H\"older-inequality to arrive at \begin{eqnarray}\label{est:VIII} &\hspace{-10pt}|(VIII)|&\leq C(\mathbb{R},\ww) \tau\big(\|\uu\|_{L^2(\tau,T;L^2(\Omega_0))}+\|\uu\|_{L^2(0,T-\tau;L^2(\Omega_0))}\big)\;\|\left[\|\uu\|_{H^1(\Omega_0)}\right]_\tau\|_{L^2(0,T-\tau)}\nonumber
\\ &&\leq C(\mathbb{R},\mathbb{R}^{-1},\ww) \tau\|\uu\|_{L^2(0,T;H^1(\Omega_0))}^2.
\end{eqnarray}

The next term associated with the pressure Laplacian, cf. \eqref{def:a1}, is denoted (up to \(\rho^2\)) by \[\begin{aligned}(IX):= \varepsilon  a(q,\Phi)=\varepsilon \int_0^{T-\tau}\int_s^{s+\tau}\int_{\Omega_0}J^{-1}(\mathbb{R}^{T}\nabla_y q)\cdot (\mathbb{R}^{T}\nabla_y \Phi)dydtds,\end{aligned}\]  where  \(\Phi(y;s):=q(y,s)-q(y,s+\tau) \). Analogusly we set \(\varphi(t):=\|q(t)\|_{H^1(\Omega_0)}\) and use the Steklov-average property \eqref{stek1} with \(p=2\) to estimate
\begin{eqnarray}   
\label{est:IX}& |(IX)|& \leq C(\mathbb{R})\varepsilon \int_0^{T-\tau}\Big(\|q(s)\|_{H^1(\Omega_0)}+\|q(s+\tau)\|_{H^1(\Omega_0)}\Big)\int_s^{s+\tau}\|q(t)\|_{H^1(\Omega_0)} dtds \nonumber\\
& &\leq C(\mathbb{R})\varepsilon \tau \int_0^{T-\tau}\Big(\|q(s)\|_{H^1(\Omega_0)}+\|q(s+\tau)\|_{H^1(\Omega_0)}\Big)\;  \left[\|q\|_{H^1(\Omega_0)}\right]_\tau(s)ds\\
&&\leq  C(\mathbb{R})\varepsilon \tau \Big(\|q\|_{L^2(0, T-\tau;H^1(\Omega_0))}+\|q\|_{L^2(\tau,T;H^1(\Omega_0))}\Big)\|\left[\|q\|_{H^1(\Omega_0)}\right]_\tau\|_{L^2(0,T-\tau)}\nonumber\\ 
&&\leq C(\mathbb{R}) \tau \|\sqrt{\varepsilon}q\|_{L^2(0,T;H^1(\Omega_0))}^2.\nonumber
\end{eqnarray}

The next remaining term is related to the tension (external stress) acting at the fluid-solid interface
\[\begin{aligned}(X)
&:=\frac{1}{\rho}\int_0^{T-\tau}\int_s^{s+\tau} \int_{\Gamma_{\mathrm{FSI}}^0}
 \sigma_s\vec{\nn}\cdot \psi dS(y)dtds. 
\end{aligned}\] 
 With the boundedness of \(\mathbb{R}, \mathbb{R}^{-1}\) and the coupling condition \eqref{eq:w}, one has
\[\begin{aligned}&|(X)|
 \leq \frac{1}{\rho}\int_0^{T-\tau}\int_s^{s+\tau}\int_{\Gamma_{\mathrm{FSI}}^0}|\psi|\; |\sigma_s|_FdS(y)dtds
\\
&\leq C(\mathbb{R},\nabla \mathbb{R})\tau\int_0^{T-\tau}\big(\|\ww(s+\tau)\|_{L^2(\Gamma_{\mathrm{FSI}}^0)}+\|\ww(s)\|_{L^2(\Gamma_{\mathrm{FSI}}^0)}\big)\left[\|\sigma_s(t;s)\|_{L^2(\Gamma_{\mathrm{FSI}}^0)}\right]_\tau ds.
\end{aligned}\]
Using the H\"older inequality and the property of the Steklov-average  \eqref{stek1} for the function \(\varphi(t):=\|\sigma_s(t)\|_{L^2(\Gamma_{\mathrm{FSI}}^0)}\) and the boundary imbeddings \eqref{n=2,deltaOmega2}, \eqref{n=3,deltaOmega2}, we  have
\begin{equation}\label{est:X}|(X)| 
 \leq C(\mathbb{R}, \nabla \mathbb{R})\tau \|\ww\|_{L^2(0,T; H^1(\Omega_0))}\|\sigma_s\|_{L^2(0,T;\Gamma_{\mathrm{FSI}}^0 )}. 
\end{equation}

The inlet and outlet boundary integrals
\[(XI):=\int_0^{T-\tau}\int_s^{s+\tau} \int_{\Gamma_{\mathrm{in}/\mathrm{out}}} \frac{p_{\mathrm{in}/\mathrm{out}}}{\rho}\mathbb{R}^T\vec{\nn}\cdot \psi dS(y)dtds,\] 
can be estimated again by the Steklov-average property \eqref{stek1} for  \(\varphi(t):=\|p_{\mathrm{in}/\mathrm{out}}(t)\|_{L^2(\Gamma_{\mathrm{in/out}})} \)  and the boundary imbeddings \eqref{n=2,deltaOmega2}, \eqref{n=3,deltaOmega2} 
\begin{equation}\label{est:XI}\begin{aligned}&|(XI)|
 \leq C(\mathbb{R}) \tau \int_0^{T-\tau} \frac{1}{\tau}\int_s^{s+\tau}\|p_{\mathrm{in}/\mathrm{out}}(t)\|_{L^2(\Gamma_{\mathrm{in}/\mathrm{out}})} \;\|\psi(t)\|_{L^2(\Gamma_{\mathrm{in}/\mathrm{out}})} dtds\\
& \leq C(\mathbb{R},\nabla \mathbb{R}) \tau \int_0^{T-\tau}\left(\|\uu(s+\tau)\|_{L^2(\Gamma_{\mathrm{in}/\mathrm{out}})}+\|\uu(s)\|_{L^2}\right)
\left[\|p_{\mathrm{in}/\mathrm{out}}\|_{L^2(\Gamma_{\mathrm{in}/\mathrm{out}})} \right]_\tau (s) ds\\
& \leq C(\mathbb{R},\mathbb{R}^{-1},\nabla \mathbb{R}, \nabla \mathbb{R}^{-1},\Omega_0)\tau \|\uu\|_{L^2(0,T; H^1(\Omega_0))}\;\|p_{\mathrm{in}/\mathrm{out}}\|_{L^2(0,T; L^2(\Gamma_{\mathrm{in}/\mathrm{out}}))}.
\end{aligned}\end{equation}
The estimates \eqref{est:III}, \eqref{est:IV},  \eqref{est:I+II}, \eqref{est:V}, 
 \eqref{est:6n=2}--\eqref{est:XI} of the terms \((I)\)--\((XI)\) arising in the weak formulation \eqref{A.2} conclude the proof of estimate  \eqref{secondestimate} in the case \(n=2\) and \(n=3\). 
\end{proof}

Based on the a-priori estimates \eqref{Aestimes_first_u}, \eqref{Aestimes_first_q} and a compactness argument i.e. the equicontinuity estimate from Theorem \ref{Mainthm} we can now more closely  investigate the limits of the sequences \(\uu_\varepsilon \) and \(\sqrt{\varepsilon}q_\varepsilon\).
\section{Limiting process with $\varepsilon \to 0$}
\label{limit}

In this section we argue
 that the limits \(\varepsilon \to 0\) of \(\uu_\varepsilon\) and \(\sqrt{\varepsilon}q_\varepsilon\) are related to the weak solution of the incompressible Navier-Stokes equations giving a more precise formulation of Corollary \ref{thm:Main2}.
To this end,  we need to have a closer look at the convergence of the sequence \(\uu_\varepsilon\) and the terms in \eqref{weakall} individually.
Strong convergence is needed to treat the terms, where the approximation enters non-linearly, while weak convergence suffices to treat the linear terms.
First, we state the following strong convergence result, which is a consequence of Theorem \ref{Mainthm}.

\begin{lemma}\label{weakstrong}
Under the assumptions of Theorem \ref{Mainthm}, for \(1\leq p<4\),
\begin{equation}\label{comp}\uu_\varepsilon \rightarrow \uu_0\text{ strongly in }L^p((0,T) \times \Omega_0) \text{ as } \varepsilon \to 0 \end{equation}
\begin{proof}
    The first a-priori estimate \eqref{Aestimes_first_u}
 grants that \(\uu_\varepsilon\)  has subsequences that converge weakly in \(L^2(0,T;H^1(\Omega_0))\) as \(\varepsilon\to 0\) based on the reflexivity for Hilbert-spaces \cite{Alt}.
 
 The second a-priori estimate  \eqref{secondestimate} together with the uniform boundedness of \(\uu_\varepsilon\) due to  \eqref{Aestimes_first_u} 
grants the convergence \(\sqrt{J}\uu_\varepsilon\rightarrow \sqrt{J}\uu_0\)  {strongly} in \(L^1((0,T)\times\Omega_0)\) based on the Alt-Luckhaus compactness Argument, see \cite[Lemma 1.9]{luckhaus} with \(b(\uu_\varepsilon)=\uu_\varepsilon\) and \(B(\uu_\varepsilon)=\frac{1}{2}|\uu_\varepsilon|^2\). 
By assumption \eqref{boundd2} \(\sqrt{J}\) is bounded, boundedly invertible in \(L^\infty(0,T\times\Omega_0) \), thus \(\uu_\varepsilon\to \uu_0\) in \(L^1((0,T)\times \Omega_0)\) strongly. 

By the uniform \(L^4((0,T)\times \Omega_0)\)-boundedness of \(\uu_\varepsilon\)   (for \(n=3\) as an assumption, for \(n=2\) as a consequence of the Sobolev-imbedding \eqref{n=2,Omega0}) 
 the strong \(L^1((0,T)\times \Omega_0)\) convergence can be extrapolated to get the strong convergence  in \(L^p((0,T)\times \Omega_0)\) for \(1< p<4\). Namely, for \(\frac{1}{p}=\theta+\frac{1-\theta}{4}\) and some \(0<\theta<1\), one has
\[\begin{aligned}\|\uu_\varepsilon-\uu_0\|_{L^p((0,T)\times \Omega_0)} &\leq \|\uu_\varepsilon-\uu_0\|_{L^1((0,T)\times \Omega_0)}^\theta \; \|\uu_\varepsilon-\uu_0\|_{L^4((0,T)\times \Omega_0)}^{1-\theta}\\ &\leq C^{1-\theta} \|\uu_\varepsilon-\uu_0\|_{L^1((0,T)\times \Omega_0)}^\theta, \end{aligned}\] see  \cite[Appendix B.2.h]{Evans}. Therefore  \(\|\uu_\varepsilon-\uu_0\|_{L^p((0,T)\times \Omega_0)} \to 0\) for \(\varepsilon \to 0, 1\leq p<4\). 
\end{proof}
\end{lemma}

Note that from the a-priori estimate \eqref{Aestimes_first_q}, we cannot grant that the pressure \(q_\varepsilon\) itself has a weakly convergent subsequence. 
However \(\varepsilon q_\varepsilon\) converges  weakly in \(L^2(0,T;H^1(\Omega_0))\) to \(0\)  due to the uniform boundedness of \(\sqrt{\varepsilon} q_\varepsilon\) and the   additional factor \(\sqrt{\varepsilon}\).

From the first a-priori estimate \eqref{Aestimes_first_u}, it also follows that \(\nabla\uu_\varepsilon\)  has subsequences that converge weakly to \(\nabla\uu_0\) in \(L^2(0,T;L^2(\Omega_0))\) as \(\varepsilon\to 0\) (see \cite[Beispiel 6.4(3)]{Alt}).

With the strong convergence from Lemma \ref{weakstrong}, we can now show that the limiting process \(\varepsilon \to 0\) in the artificial compressibility formulation \eqref{weakall} leads to an incompressible formulation of the fluid-problem on a deforming domain. 

\begin{theorem} Under the assumptions of Theorem \ref{Mainthm}, the limit \(\uu_0\) of \(\uu_\varepsilon\) is divergence-free almost everywhere and correspondingly, the fluid motion equations \eqref{first equation}, \eqref{presure eq}  with \(\varepsilon\) replaced by \(0\), i.e.
\begin{equation*}\partial_t \vv_0 +(\vv_0\cdot \nabla )\vv_0-2\frac{\mu}{\rho}\div[e(\vv_0)]+\frac{1}{\rho}\nabla p_0=\vec 0,\quad \div \vv_0=0\end{equation*} in the deforming domain \(\Omega_t\) with boundary conditions \eqref{eq:w}, \eqref{tensor}, \eqref{kinematicp}, \eqref{vwall}  and zero initial condition hold in the weak sense. 
\end{theorem}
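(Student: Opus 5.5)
The plan is to pass to the limit $\varepsilon\to 0$ in the weak formulation \eqref{weakall}, term by term, along a subsequence. We use the weak convergences from \eqref{Aestimes_first_u}, \eqref{Aestimes_first_q} and the strong convergence from Lemma \ref{weakstrong}. Divergence-freeness has to be obtained first, since it determines the admissible test class of the limit problem.

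\textbf{Step 1: $\div_y(\mathbb{R}\uu_0)=0$.} Take $\psi=0$ and an arbitrary $\Phi\in L^2(0,T;H^1(\Omega_0))$ in \eqref{weakall}. This gives
\[
\frac{\varepsilon}{\rho^2}\int_0^T a(q_\varepsilon,\Phi)\,dt+\frac1\rho\int_0^T\!\!\int_{\Omega_0}\div(\mathbb{R}\uu_\varepsilon)\Phi\,dy\,dt=0 .
\]
The first term is bounded by $\sqrt\varepsilon\,C(\mathbb{R})\|\sqrt\varepsilon q_\varepsilon\|_{L^2(H^1)}\|\Phi\|_{L^2(H^1)}\to0$, using \eqref{Aestimes_first_q}. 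The second term is linear in $\uu_\varepsilon$ with bounded coefficients $\mathbb{R},\nabla\mathbb{R}$, so it converges by the weak convergence in $L^2(0,T;H^1(\Omega_0))$. Hence $\int\!\!\int\div(\mathbb{R}\uu_0)\Phi=0$ for all $\Phi$, and by density $\div_y(\mathbb{R}\uu_0)=0$ a.e. By the Piola identity \eqref{eq:divtransform} this is equivalent to $\div_x\vv_0=0$ in $\Omega_t$. As a consequence, the term $\frac12\int\div(\mathbb{R}\uu_0)\uu_0\cdot\psi$ vanishes in the limit.

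\textbf{Step 2: limit in the momentum part.} Restrict to velocity test functions $\psi$ as in \eqref{def:psi} with $\div_y(\mathbb{R}\psi)=0$, which are the Piola-transformed solenoidal fields, and take $\Phi=0$. The pressure term $\int q_\varepsilon\div(\mathbb{R}\psi)$ then drops out. This is essential, since $q_\varepsilon$ itself is not bounded. The remaining terms are handled as follows.
\begin{itemize}
\item The linear terms $\int\uu_\varepsilon\cdot\partial_t(J\psi)$, $\int(\nabla\uu_\varepsilon)^T\mathbb{R}\ww\cdot\psi$ and $((\uu_\varepsilon,\psi))$, together with the boundary data terms, pass to the limit by weak convergence, because $\psi$ is smooth enough.
\item In the trilinear term $\int(\nabla\uu_\varepsilon)^T\mathbb{R}\uu_\varepsilon\cdot\psi$, the factor $\nabla\uu_\varepsilon$ converges weakly in $L^2$, and $\mathbb{R}\uu_\varepsilon\psi$ converges strongly in $L^2$. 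The latter follows from the $L^p$ convergence with $p<4$ in \eqref{comp}, provided $\psi\in L^\infty(0,T;L^r)$ with $r$ large; for $n=3$ we first use smooth $\psi$ and then argue by density.
\item For the term $\frac12\int\div(\mathbb{R}\uu_\varepsilon)\uu_\varepsilon\cdot\psi$, weak convergence of $\div(\mathbb{R}\uu_\varepsilon)$ combined with strong convergence of $\uu_\varepsilon\cdot\psi$ gives the limit $\frac12\int\div(\mathbb{R}\uu_0)\uu_0\cdot\psi=0$ by Step 1.
\end{itemize}

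\textbf{Main obstacles.} The boundary convective term $\int_{\Gamma_{\mathrm{in/out}}}(\uu_\varepsilon\cdot\psi)\mathbb{R}\uu_\varepsilon\cdot\nn$ and the terminal term $\int J(T)\uu_\varepsilon(T)\cdot\psi(T)$ are the main obstacles. For the boundary term, the plan is to use the interpolation trace inequalities \eqref{n=2,deltaOmega3}, \eqref{n=3,deltaOmega2}: applied to $\uu_\varepsilon-\uu_0$, they give strong convergence of traces in $L^2(0,T;L^2(\partial\Omega_0))$, since $\|\cdot\|_{L^2}^{1/2}\|\cdot\|_{H^1}^{1/2}$ goes to zero by the strong $L^2$ convergence and the $H^1$ bound. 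For the quadratic trace term in $n=3$ we need a little more, and would test with $\psi$ bounded on $\Gamma_{\mathrm{in/out}}$. For the terminal term, we choose $\psi(T)=0$, or equivalently use the formulation on $(0,T)$ with the time derivative on $\psi$, so that weak-$*$ convergence in $L^\infty(L^2)$ suffices.

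\textbf{Step 3: pressure recovery.} The limit identity holds for all Piola-solenoidal $\psi$. The incompressible weak formulation, with the pressure recovered if desired via de Rham/Ne\v{c}as on $\Omega_t$, and the boundary conditions are then inherited from the test class and \eqref{eq:w}, \eqref{vwall}, which pass to the limit by weak continuity of traces.
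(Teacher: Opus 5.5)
Your proposal is correct and follows the paper's proof essentially step for step. You test with $\psi=0$ to obtain $\div_y(\mathbb{R}\uu_0)=0$, restrict to test functions with $\div_y(\mathbb{R}\psi)=0$ and $\psi(T)=0$ so that the pressure and terminal terms drop out, and pass to the limit in the convective terms by pairing the weak convergence of $\nabla_y\uu_\varepsilon$ and $\div_y(\mathbb{R}\uu_\varepsilon)$ with the strong convergence from Lemma~\ref{weakstrong}, all as the paper does.

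Two of your choices are in fact sounder than the paper's. For the $\Gamma_{\mathrm{in/out}}$ term you use the multiplicative trace inequality $\|u\|_{L^2(\partial\Omega_0)}^2\le C\|u\|_{L^2(\Omega_0)}\|u\|_{H^1(\Omega_0)}$, valid for $n=2,3$, whereas the paper's $n=2$ argument bounds the trace by the $L^2(\Omega_0)$ norm alone, which does not hold. You also justify $\div_y(\mathbb{R}\uu_0)=0$ by weak convergence, which is cleaner than the paper's ``almost everywhere'' claim.
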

\begin{proof}

 \textit{(I) Divergence-freeness of the weak solution:}\\
First we show that the weak limit \(\uu_0=\lim_{\varepsilon \to 0} \uu_\varepsilon\) is divergence-free almost everywhere, and thus the fluid is incompressible in the limit in the weak sense. 
To this end, we proceed testing \eqref{weakall} with \(\psi=0\) and arbitrary \(\Phi\) to get
\[0=\frac{\varepsilon}{\rho^2}\int_0^{T}\int_{\Omega_0}J^{-1}(\mathbb{R}^{T}\nabla q_\varepsilon)\cdot (\mathbb{R}^{T}\nabla \Phi) dydt+\frac{1}{\rho}\int_0^{T}\int_{\Omega_0}\div(\mathbb{R} \uu_\varepsilon)\Phi dydt.\]
Rearranging the terms and double usage of the Cauchy-Schwarz inequality yields \begin{equation} \label{eq:free}\begin{aligned}&\left|\int_0^{T}\int_{\Omega_0}\div(\mathbb{R} \uu_\varepsilon)\Phi dydt\right|\leq \frac{\sqrt{\varepsilon}}{\rho}\int_0^{T}\int_{\Omega_0} J^{-1}|\mathbb{R}^{T}\nabla (\sqrt{\varepsilon}q_\varepsilon)|\; |\mathbb{R}^{T}\nabla \Phi| dydt
\\& \leq \sqrt{\varepsilon} \, C(\mathbb{R})\|\sqrt{\varepsilon} q\|_{L^2(0,T;H^1(\Omega_0))}\; \|\Phi\|_{L^2(0,T;H^1(\Omega_0))} .
\end{aligned}\end{equation}
Since the test function is arbitrary in \(L^2(H^1(\Omega_0))\) and 
 $\|\sqrt{\varepsilon} q_{\varepsilon}\|_{L^2(0,T;H^1(\Omega_0))}$ is bounded uniformly on $\varepsilon$, cf. \eqref{Aestimes_first_q}, and
the constant $C(\mathbb{R})$ does not depend on \(\varepsilon\), it follows that 
$\div(\mathbb{R}\uu_\varepsilon) \rightarrow 0$  as \( \varepsilon \rightarrow 0\) almost everywhere
 in $(0,T)\times\Omega_0$ and the (weak) limit   $\mathbb{R}\uu_0$ is divergence-free a. e.  By the transformation of integrals (see \eqref{eq:inttransformation}, \eqref{eq:divtransform}) \[\begin{aligned}\int_{\Omega_t} \div_x \vv_\varepsilon\;  \phi dx&=\int_{\Omega_0}JJ^{-1}\div_y (\mathbb{R}\uu_\varepsilon) \Phi dy=\int_{\Omega_0}\div_y (\mathbb{R}\uu_\varepsilon) \Phi dy
\end{aligned},\] with testfunctions \(\phi\) defined in \(\Omega_t\) and the corresponding \(\Phi\) in \(\Omega_0\), it follows that the limit \(\vv_0\) also is divergence-free on \(\Omega_t\).
As a consequence, the testfunctions \(\psi\), cf. \eqref{def:psi}, \eqref{presure eq}, can also be considered as divergence-free on \(\Omega_0\) in the sense of its Piola transformation \(\mathbb{R}\psi\).\\
\medskip\\
\textit{(II) Limiting process in the weak formulation:}\\ 
We now let \(\varepsilon \to 0\) in all terms of the identity \eqref{weakall} considering divergence-free testfunctions, i.e. \(\div_y(\mathbb{R}\psi)=0\) a. e.

\textit{(1) Limit of the nonlinear convective terms:}\\
The weak formulation \eqref{weakall} contains two non-linear convective terms that converge by similar argumentation. First, we consider \(\psi\in C((0,T)\times \overline{\Omega_0)}\) and show that \[\int_0^T \int_{\Omega_0} (\nabla_y \uu_\varepsilon)^T \mathbb{R} \uu_\varepsilon \cdot \psi dydt\to \int_0^T \int_{\Omega_0} (\nabla_y \uu_0)^T \mathbb{R} \uu_0\cdot \psi dydt \text{ as } \varepsilon \to 0.\] 
To this end, note that \(\mathbb{R}\) is bounded in \(L^\infty((0,T)\times \Omega_0)\) by assumption \eqref{ass:globdiff} and
\begin{equation}\label{convlimit} \begin{aligned}
&\int_0^T\int_{\Omega_0}\big[(\nabla_y \uu_\varepsilon)^T\mathbb{R}\uu_\varepsilon-(\nabla_y\uu_0)^T \mathbb{R} \uu_0 \big]\cdot \psi dydt \\
&= \int_0^T\int_{\Omega_0}(\nabla_y \uu_\varepsilon)^T  \mathbb{R}(\uu_\varepsilon-\uu_0)\cdot \psi dydt+\int_0^T\int_{\Omega_0} \mathbb{R}\uu_0 \cdot (\nabla_y \uu_\varepsilon-\nabla_y\uu_0)\psi dydt.
\end{aligned}\end{equation}
The first  summand  can be estimated by \[C(\mathbb{R})\|\psi\|_{C((0,T)\times \overline{\Omega_0})} \|\nabla_y \uu_\varepsilon\|_{L^2(0,T;L^2(\Omega_0))} \|\uu_\varepsilon -\uu_0\|_{L^2(0,T;L^2(\Omega_0))}\]
and thus converges to zero by the strong convergence \eqref{comp} of \(\uu_\varepsilon \to \uu_0\) in \\ \(L^2(0,T;L^2( \Omega_0))\)
and the uniform boundedness of \(\nabla_y \uu_\varepsilon\) in \(L^2(0,T;L^2(\Omega_0))\), \eqref{Aestimes_first_u}. \\The second summand converges to zero by  the weak convergence of \(\nabla \uu_\varepsilon \psi\rightharpoonup  \nabla \uu_0\psi\) in \(L^2(0,T;L^2(\Omega_0))\) and \(\mathbb{R}\uu_0\in L^2(0,T;L^2(\Omega_0))\). 
Second, we show that the second part of the convective term \begin{equation}\label{eq:div}\frac{1}{2}\int_0^T \int_{\Omega_0}\div(\mathbb{R}\uu_\varepsilon)\uu_\varepsilon \cdot \psi dydt\to 0 \text{ as } \varepsilon \to 0\end{equation} for continuous testfunctions \(\psi\in C((0,T)\times \Omega_0)\).
To this end, note that \(\mathbb{R}\) is bounded in \(W^{1,\infty}((0,T)\times\Omega_0)\) by assumption \eqref{defW} and
\begin{equation}\label{convlimit2} \begin{aligned}
&\int_0^T\int_{\Omega_0}\div(\mathbb{R}\uu_\varepsilon)\uu_\varepsilon \cdot \psi dydt \\
&= \int_0^T\int_{\Omega_0}\div(\mathbb{R}\uu_\varepsilon)(\uu_\varepsilon-\uu_0) \cdot \psi dydt + \int_0^T\int_{\Omega_0}\div(\mathbb{R}\uu_\varepsilon)\uu_0\cdot\psi  dydt, 
\end{aligned}\end{equation}
where similarly  \(\uu_\varepsilon \to \uu_0\) strongly and \(\div(\mathbb{R}\uu_\varepsilon)\) is uniformly bounded in  \(L^2(0,T;L^2(\Omega_0))\) for the first summand. For the second summand   \(\div(\mathbb{R} \uu_\varepsilon)\rightharpoonup 0\)  weakly (see \eqref{eq:free} and lines below) and \(\uu_0\cdot \psi\in L^2(0,T;L^2(\Omega_0))\). For non-continuous testfunctions \(\psi\) as in \eqref{assu}, the convergence follows by a standard density argument for Sobolev functions. 

Here, we have demonstrated that the additional nonlinear term \eqref{eq:div}, which was added to the convective term, cf. \eqref{def:b}, to compensate for the loss of selenoidality of the weak solution, disappears in the limit. This confirms the consistency of our approximation with the original incompressible problem, where this term is not present.

\textit{(2) Convergence of the  nonlinear inlet/outlet terms:}\\
The weak \(L^2(0,T;H^1(\Omega_0))\)-convergence  of the space \(\uu_\varepsilon\) together with the boundary imbedding of \(H^1(\Omega_0)\) in \(L^2(\partial \Omega_0)\) can be used for the convergence of the non-linear inlet/outlet terms.
 One has  
\begin{equation}\begin{aligned}
&\int_0^T \int_{\Gamma_{\mathrm{in}/\mathrm{out}}}
\big((\uu_\varepsilon\cdot \psi)\mathbb{R}\uu_\varepsilon -(\uu_0\cdot \psi)\mathbb{R}\uu_0\big)\cdot\vec{\nn}dS(y)dt
\\ &=     \int_0^T\int_{\Gamma_{\mathrm{in}/\mathrm{out}}}
\big((\uu_\varepsilon\cdot \psi)\mathbb{R}(\uu_\varepsilon -\uu_0\big)\cdot\vec{\nn} + 
\big((\uu_\varepsilon-\uu_0)\cdot \psi)\mathbb{R}\uu_0)\cdot \vec{\nn}dS(y)dt.
\end{aligned}\end{equation}
We proceed  considering at first testfunctions \(\psi\in C((0,T)\times \overline{\Omega_0})\) continuous up to the boundary and  \(\mathbb{R}\) bounded by \eqref{boundd2}.
In the case of \(n=2\), one can estimate the first summand using   \eqref{n=2,deltaOmega2} by  \[\begin{aligned}&\left|\int_0^T\int_{\Gamma_{\mathrm{in}/\mathrm{out}}}
(\uu_\varepsilon\cdot \psi)\mathbb{R}(\uu_\varepsilon -\uu_0\big)\cdot\vec{\nn}dS(y)dt\right|\\ & \leq C(\mathbb{R},\|\psi\|)\int_0^T \|\uu_\varepsilon\|_{L^2(\partial \Omega_0)} \; \|\uu_\varepsilon-\uu_0\|_{L^2(\partial \Omega_0)}dt\\
& \leq C(\mathbb{R},\Omega_0,\|\psi\|)\int_0^T \|\uu_\varepsilon\|_{L^2(\Omega_0)} \;\|\uu_\varepsilon-\uu_0\|_{L^2(\Omega_0)} dt\\ & \leq C(\mathbb{R},\Omega_0,\|\psi\|)  \|\uu_\varepsilon\|_{L^2(0,T;L^2(\Omega_0))}\; \|\uu_\varepsilon-\uu_0\|_{L^2(0,T;L^2(\Omega_0))},\end{aligned}\]
where \(\|\psi\|=\|\psi\|_{C((0,T)\times \overline{\Omega_0})}\), the first factor is uniformly bounded by the a-priori estimate \eqref{Aestimes_first_u} and the second factor converges to zero by the strong convergence  \eqref{comp}. The convergence of the second summand  to zero follows similarly with 
\[\begin{aligned}&\left|\int_0^T\int_{\Gamma_{\mathrm{in}/\mathrm{out}}}
(((\uu_\varepsilon-\uu_0)\cdot \psi)\mathbb{R}\uu_0)\cdot \vec{\nn}dS(y)dt\right|\\ &\leq C(\mathbb{R},\Omega_0, \|\psi\|)\|\uu_0\|_{L^2(0,T;L^2(\Omega_0))}\; \|\uu_\varepsilon-\uu_0\|_{L^2(0,T;L^2(\Omega_0))}.\end{aligned}\]
In the case of \(n=3\) the estimates for the first summand have to be modified accordingly using \eqref{n=3,deltaOmega2} 
\[\begin{aligned} &C(\mathbb{R},\|\psi\|) \int_0^T\|\uu_\varepsilon\|_{L^2(\partial \Omega_0)} \; \|\uu_\varepsilon-\uu_0\|_{L^2(\partial \Omega_0)} dS(y)dt\\ &\leq C(\mathbb{R},\Omega_0,\|\psi\|)\int_0^T \|\uu_\varepsilon\|_{H^1(\Omega_0)}\|\uu_\varepsilon-\uu_0\|_{H^1(\Omega_0)}^{1/2}\; \|\uu_\varepsilon-\uu_0\|_{L^2(\Omega_0)}^{1/2}dt \\ 
& \leq C(\mathbb{R},\Omega_0,\|\psi\|)\|\uu_\varepsilon\|_{L^2(0,T;H^1(\Omega_0))}\\ & \qquad \cdot\left(\|\uu_\varepsilon\|_{L^2(0,T;H^1(\Omega_0))}+\|\uu_0\|_{L^2(0,T;H^1(\Omega_0))}\right)^{1/2}\;\|\uu_\varepsilon-\uu_0\|_{L^2(0,T;L^2(\Omega_0))}^{1/2}.
\end{aligned}\]
The first factors are uniformly bounded by \eqref{Aestimes_first_u} and the last factor converges to zero by  \eqref{comp}.
By a standard density argument for Sobolev functions the convergence to zero follows for non-continuous testfunctions \(\psi\) as in \eqref{assu} as well.

\textit{(3) Convergence of the remaining linear terms:}\\
The remaining  volume and boundary terms in \eqref{weakall} that contain \(\uu_\varepsilon\) only linearly  converge using the corresponding weak-\(L^2(0,T;H^1(\Omega_0))\) convergence result for \(\uu_\varepsilon, \sqrt{\varepsilon}q_\varepsilon\). The limiting argumentation is straightforward and omitted here. 

\textit{(4) The case $\varepsilon=0$:}\\
After letting \(\varepsilon \to 0\) in all terms of \eqref{weakall} and by considering divergence-free testfunctions, the weak formulation variant of \eqref{weakall} reads for \(\varepsilon=0\) as 

\begin{eqnarray}
\label{weakall=0}
0&=-&\hspace{-10pt}\int_0^{T}\int_{\Omega_0}J\uu_0 \cdot \frac{\partial}{\partial t}\psi dy dt -\int_0^{T}\int_{\Omega_0}(\partial_t J) \uu_0 \cdot \psi dydt
-\int_0^T\int_{\Omega_0}(\nabla_y \uu_0)^T\mathbb{R}\ww\cdot \psi dydt\nonumber
\\ &&+\int_0^{T}\Bigg[\hat b(\uu_0,\uu_0,\psi)-\frac{\mu}{\rho}\int_{\Omega_0}J^{-1}[\mathbb{R}^T\nabla_y\uu_0+(\mathbb{R}^T\nabla_y\uu_0)^T]:[\mathbb{R}^T\nabla_y\psi]dy\\
&&+\frac{1}{\rho}\int_{\Gamma_{\mathrm{FSI}}^0}\sigma_s\vec{\nn}\cdot \psi dS(y)+\int_{\Gamma_{\mathrm{in}/\mathrm{out}}} \left(\frac{p_{\mathrm{in}/\mathrm{out}}}{\rho}\mathbb{R}^T\vec{\nn}\cdot \psi-\frac{1}{2}(\uu_0\cdot \psi)\mathbb{R}\uu \cdot  \vec{\nn}\right) dS(y)\Bigg],\nonumber\end{eqnarray} 
where
\[\uu_0|_{\Gamma_{\mathrm{wall}}}=0,\qquad \uu_0\times \vec{\nn}|_{\Gamma_{\mathrm{in}/\mathrm{out}}}=0 ,\qquad \uu_0|_{\Gamma_{\mathrm{FSI}}^0}=\ww \qquad \] satisfying the boundary conditions \eqref{eq:w}, \eqref{tensor},  \eqref{kinematicp}, \eqref{vwall}  
with the initial condition at \(t=0\) \(\uu_0(y,t=0)=0\)  for all  \(y \in \Omega_0\) 
 for all testfunctions 
  \[\psi\in 
H^1(0,T;H^1(\Omega_0)), \quad \psi|_{\Gamma_{\mathrm{wall}}}=0, \quad \div(\mathbb{R}\psi)=0 \text{ a. e.}, \quad \psi(T)=0 .\]
Note that here \(\frac{1}{\rho}\int_{\Omega_0}\div_y(\mathbb{R}\uu_0)\Phi dydt\) is not present in \eqref{weakall=0} due to the divergence-freeness of the limit \(\mathbb{R}\uu_0\), see point (I). Moreover, the (transformed) pressure \(q\) is (as usual) not present in the weak formulation limit \eqref{weakall=0} due to the solenoidality of the testfunctions \((\mathbb{R}\psi)\).
Lastly, 
\(\hat b(\uu_0,\uu_0,\psi)\) simplified to \(\int_{\Omega_0}(\nabla_y\uu_0)^T \mathbb{R}\uu_0\cdot \psi dy \) in the weak formulation after the limiting process \(\varepsilon \to 0\) for vanishing artificial compressibility in \eqref{weakall=0}, see point (I).

We conclude that by \eqref{weakall=0} the transformed to \(\Omega_0\) weak formulation for divergence-free velocities has been obtained, which corresponds to the incompressible Navier-Stokes equations 
\begin{equation*}\partial_t \vv +(\vv\cdot \nabla )\vv-2\frac{\mu}{\rho}\div[e(\vv)]+ \frac{1}{\rho}\nabla p=\vec 0,\quad \div( \vv)=0 \text{ in } \Omega_t\end{equation*}
 where the velocity is divergence-free and the \(\frac{1}{2}\vv \div{\vv}\) term is not present (see \cite[Section III.8]{Temam}).

\end{proof}

\section{Conclusion}\label{sec:conclusion}

 In this work, we introduced a small artificial incompressibility with a parameter \(\varepsilon>0\) in our approximate system \eqref{first equation}, \eqref{presure eq}. 
This  approximation serves to overcome the difficulties associated with solenoidal functional spaces for fluid-flow problems on moving domains.
Under mild regularity conditions on the given domain deformation 
\(\dd\), we  show that artificial compressibility approximations \(\uu_\varepsilon\) converge to divergence-free weak solutions of the Navier-Stokes fluid flow problem based on an alternative compactness argument via the equicontinuity estimate in time, see Theorem \ref{Mainthm}. Note that strong convergence cannot be  straightforwardly obtained by the classical Lions-Aubins Lemma since the estimate on the time derivative depends on \(\varepsilon\). The regularity assumptions on the domain deformation  here are related to a bijective  map of the fluid domain to a fixed reference domain. In case \(n=3\), additional uniform \(L^4(0,T; L^4(\Omega_0))\)- boundedness  of the velocity is imposed to obtain the equicontinuity result. 

 The main result stated in Theorem  \ref{Mainthm} poses a compactness argument, which has  been proven here for a deforming, time-dependent domain  by construction of  special test functions including the difference of two Piola-transformed  weak solutions on different time points, preserving the divergence operator for different domain time-points. This result can be generalized for full fluid-structure interaction problems, where a domain-deformation equation is additionally coupled. However, the 'natural' regularity of the domain deformation  given by usual structure models is not sufficient to obtain compactness results, thus further approximating or regularizing steps are necessary and have to be investigated for full fluid-structure interaction problems.
 
\section*{Ackgnowledgments}

The authors would like to thank J\'{a}n Filo (Comenius University, Bratislava) for fruitful discussions during visits in Landau and Bratislava, respectively.


\newpage
\section{Appendix} 
\label{Appendix}

The purpose of this appendix is to give an overview on the (standard) calculus applied in the derivation of the presented estimates and proofs.\\
\textbf{Transformation of  volume-integrals:} For a scalar function \(f\) and a coordinate transformation \(x=A_t(y)\) it holds
\begin{equation}\label{eq:inttransformation}\int_{\Omega_t}f(x,t)dx=\int_{\Omega_0}f(A_t(y))|\det \nabla_yA_t(y)|dy=\int_{\Omega_0}f(A_t(y))J(y)dy.\end{equation} 
\textbf{Transformation of the divergence operator} (cf.  \cite{anna2}): 
The transformation of the  divergence operator between \(\Omega_t\) and the reference domain \(\Omega_0\) reads as follows:
\begin{equation}\label{eq:divtransform}\div_x \vv=J^{-1}\div_y( \mathbb{R}\uu ),\end{equation}
where we used the abbreviations \( \mathbb{J}:=(\nabla_y A_t)^T, \mathbb{R}:=J\mathbb{J}^{-1}= \mathrm{cof}\nabla A_t\) as a cofactor. For verification, note that by the product rule 
 \[\div_y(\mathbb{R}\uu)=(\div_y \mathbb{R})\cdot \uu+(\mathbb{R}^T\nabla_y)\cdot \uu=(\mathbb{R}^T\nabla_y)\cdot \uu\] 
using the Piola identity \(\div (\mathrm{cof} \nabla A_t)=0\).\\
\textbf{Transformation of gradients}: For the scalar valued functions \(q(y,t)=p(x,t)=p(A_t(y),t)\) the chain-rule for gradients  yields  \[\nabla_y q(y,t)=(\nabla_y A_t)(\nabla_x p(x,t))=\mathbb{J}^T \nabla_xp(x,t)=J\mathbb{R}^{-T}\nabla_x p(x,t).\]  
 Using \(\uu(y,t)=\vv(A_t(y),t)\)  and the chain-rule for gradient matrices, it holds
\begin{equation}\label{eq:gradtransform}\nabla_y \uu(y,t)=\nabla_y A_t(y)\nabla_x \vv(x,t) =\mathbb{J}^{T}\nabla_x \vv(x,t)=J\mathbb{R}^{-T}\nabla_x\vv(x,t).\end{equation} 
\textbf{Transformation of boundary integrals}:
  Let \(\sigma_x\) be a tensor on \(\partial\Omega_t\) and \(\varphi\) be a vector-valued testfunction in \(\Omega_t\) and  \(x=A_t(y)\), \(y\in \Omega_0\). Then we set the transformed tensor as \(\sigma_y(y):=\sigma_x(A_t(y))\) on \(\partial \Omega_0\) and the corresponding testfunction as \(\psi\).
  By the divergence theorem, \eqref{eq:inttransformation} and \eqref{eq:divtransform}, one then has the transformation of the boundary integral
\begin{equation} \label{eq:tensortransform}\begin{aligned}\int_{\partial \Omega_t}\sigma_x \vec{\nn}\cdot \varphi dS(x)&=\int_{\partial \Omega_t}(\sigma_x^T \varphi) \cdot \vec{\nn}  dS=\int_{\Omega_t}\div_x(\sigma_x^T\varphi)dx\\
&=\int_{\Omega_0}\div_y(\mathbb{R}(\sigma_y^T\psi))dy
=\int_{\partial\Omega_0}\mathbb{R}(\sigma_y)^T\psi\cdot \vec{\nn} dS(y)\\&=\int_{\partial \Omega_0}\sigma_y\mathbb{R}^T \vec{\nn}\cdot \psi dS(y),
\end{aligned}.\end{equation}
This calculus (in combination with \eqref{eq:gradtransform}) has been used for the fluid Cauchy stress tensor acting on the interface \(\Gamma_{\mathrm{FSI}}\) to derive the factor \(\mathbb{R}^T\) in the transition from the  deforming domain \eqref{tensorf} to the fixed reference domain  \eqref{tensor0}.\\
\textbf{Time-derivative:}
The ALE-Transport-Theorem and the Stokes Equation together imply the following identities: \begin{equation}\label{ALE}\begin{aligned}\frac{d}{dt}\int_{\Omega_t}f dx
&= \int_{\Omega_t}D^A_tf+f\div \ww dx
:=\int_{\Omega_t}\frac{\partial}{\partial t}f+(\nabla_xf)^T  \ww+ f\div \ww dx\\ &=\int_{\Omega_t}\frac{\partial}{\partial t}f+\div(f \ww) dx= \int_{\Omega_t}\frac{\partial}{\partial t}f dx+\int_{\partial \Omega_t}f\ww\cdot \vec{\nn} dS(x),\end{aligned}\end{equation}
where \(D_f^Af:=\frac{\partial}{\partial t}f+ (\nabla_xf)^T\ww\).\\
\textbf{Generalized Poincar\'e-inequality} \cite[Corollary 4.5.2]{Ziemer}:  For \(u\in W^{1,p}(\Omega_0)\)  with zero boundary values on \(N=\{x: u(x)=0\}\), one has that 
\begin{equation}\label{GenPoinc}\|u\|_{L^p(\Omega_0)}\leq C (B_{1,p}(N))^{-1/p}\|\nabla u\|_{L^p(\Omega_0)},\end{equation} where \(\Omega\) is a bounded extension domain (which can be granted by a Lipschitz boundary, see \cite[Section 2.5, p.64]{Ziemer}), and \(B_{1,p}\) is the Bessel capacity. 

\textbf{Steklov-average:}
We  define the Steklov-average of  \(\varphi\) as \begin{equation}\label{stek0}[\varphi]_\tau(s):=\frac{1}{\tau} \int_s^{s+\tau}\varphi(t)dt.\end{equation} 

The Steklov-average has the well-known property for \(1\leq p<\infty\) (see \cite{steklov}, \cite{anna2})
\begin{equation}\label{stek1}\| [\varphi]_\tau\|_{L^p(0,T-\tau)}\leq \|\varphi\|_{L^p(0,T)}.\end{equation}

\end{document}